\definecolor{linkcolor}{HTML}{799B03} 
\definecolor{urlcolor}{HTML}{799B03} 
\theoremstyle{plain}
\newtheorem{theorem}{Теорема}
\newtheorem{statement}[theorem]{Утверждение}
\newtheorem{lemma}[theorem]{Лемма}
\newtheorem{hypothesis}[theorem]{Гипотеза}
\theoremstyle{definition}
\newtheorem*{definition}{Определение}
\newtheorem*{notation}{Обозначение}
\newtheorem*{note}{Замечание}
\newtheorem{question}[theorem]{Открытый вопрос}
\renewcommand{\@makecaption}[2]{%
\vspace{\abovecaptionskip}%
\sbox{\@tempboxa}{#1. #2}
\ifdim \wd\@tempboxa >\hsize
   #1. #2\par
\else
   \global\@minipagefalse
   \hbox to \hsize {\hfil #1. #2\hfil}%
\fi
\vspace{\belowcaptionskip}}
\begin{document}

    \title{О дружественности деревьев}
    \author{Колодзей Дарья}

    \maketitle

    \begin{abstract}
        Впервые понятие дружественности деревьев появилось в решении гипотезы Ландо о пересечении двумерных сфер в трёхмерном пространстве.

{\it Дерево дружественно простому пути}, если рёбра этого дерева можно пронумеровать так, что для любых $k, s$ путь между рёбрами $k$ и $k + 1$ содержит либо оба ребра $k + 2s, k + 2s + 1$, либо ни одного из этих рёбер.

{\bf Теорема}. {\it Пусть в дереве существует путь, содержащий все вершины этого дерева степени $3$ и более. Тогда это дерево дружественно простому пути}.

В работе доказано ещё одно достаточное условие дружественности дерева и простого пути, а также критерий дружественности двух деревьев, одно из которых имеет диаметр $3$.
    \end{abstract}

    \section{Введение}
        \subsection{Формулировки результатов}
            \graphicspath{{02/}}

В работе изучается дружественность деревьев. Это понятие впервые появилось в решении гипотезы Ландо о пересечении двумерных сфер в трёхмерном пространстве, подробнее см. пункт \ref{lando-intro}. Сведения пункта \ref{lando-intro} используются только для доказательства утверждения \ref{symm} и необязательны для понимания работы. 

\begin{definition}[Дружественность простому пути]
    {\it Дерево дружественно простому пути}, если рёбра этого дерева можно пронумеровать так, что для любых $k \in \mathbb N, s \in \mathbb Z$ путь\footnote{Путём между рёбрами будем называть путь, который соединяет некоторые концы этих рёбер, но не содержит сами эти рёбра. Для любой пары различных рёбер дерева такой путь существует и единственнен.} между рёбрами $k$ и $k + 1$ содержит либо оба ребра $k + 2s, k + 2s + 1$, либо ни одного из этих рёбер\footnote{В этом определении запись «ребро $k$» подразумевает «ребро под номером $k$».}. Соответствующую нумерацию также будем называть {\it дружественной}.
\end{definition}

\begin{figure}
    \center{\includegraphics[height=3cm]{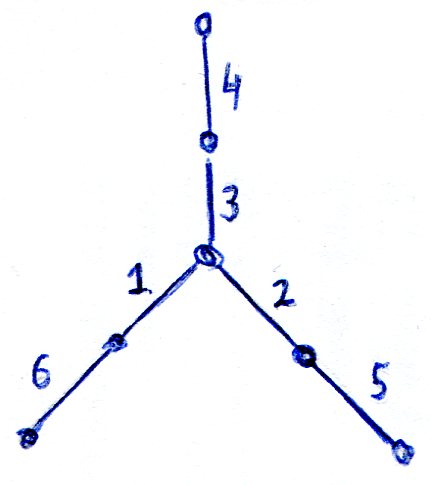}}
    \caption{\label{fig:friendly_enum}Пример дружественной нумерации.}
\end{figure}

На рисунке \ref{fig:friendly_enum} приведён пример дружественной нумерации.

\smallskip

В статье Аввакумова \cite{Avvakumov} был поднят следующий вопрос.

\begin{question}
    \label{q:path}
    Существует ли дерево, не являющееся дружественным простому пути?
\end{question}

Найти ответ на этот вопрос не удалось, однако получены интересные результаты.

\begin{figure}[h]
\begin{minipage}{0.4\linewidth}
\center{\includegraphics[height=3cm]{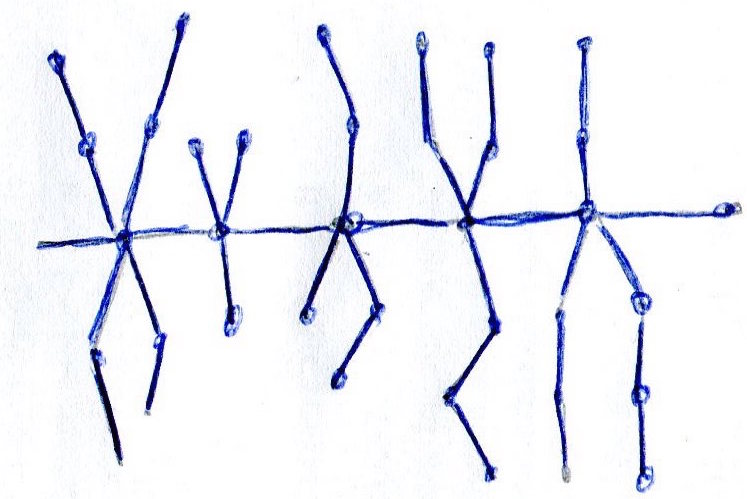}}
\caption{Пример дерева, дружественного простому пути согласно теореме \ref{low_branchiness}.\label{fig:low-branchiness-sample}}

\end{minipage}
\hfill
\begin{minipage}{0.4\linewidth}
\center{\includegraphics[height=3cm]{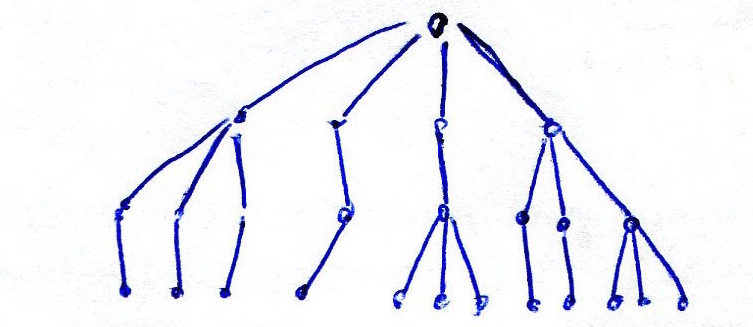}}
\caption{Пример дерева, дружественного простому пути согласно теореме \ref{parity_and_center}.\label{fig:parity-sample}}
\end{minipage}
\end{figure}

\begin{theorem}
    \label{low_branchiness}
    Пусть в дереве существует путь, содержащий все вершины этого дерева степени $3$ и более. Тогда это дерево дружественно простому пути.
\end{theorem}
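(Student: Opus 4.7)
My plan is to construct an explicit friendly numbering of $E(T)$ by following the spine.

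First, I would enlarge the given path to a longest spine $P = v_0 v_1 \cdots v_m$ with leaf endpoints (the hypothesis is preserved, since any vertex appended to an end had degree at most $2$); set $e_i := v_{i-1} v_i$, and at each interior $v_i$ let $L_{i,1}, \ldots, L_{i,d_i}$ be the pendant legs (simple paths off $P$) with lengths $\ell_{i,j}$.

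The construction then walks the spine from $v_0$ to $v_m$ and, at each $v_i$, inserts the leg-edges into the sequence according to the parity of their lengths. A leg of length $\ell$ at $v_i$ is listed in \emph{reverse} order (leaf first, $v_i$-end last), and the resulting block is placed immediately \emph{before} $e_i$ if $\ell$ is even and immediately \emph{after} $e_i$ if $\ell$ is odd. Several legs of the same parity at a single $v_i$ are interleaved in a nested ``zigzag''; for instance, two length-$2$ legs at $v_i$ with $v_i$-adjacent edges $A_1, B_1$ and leaf edges $A_2, B_2$ contribute the block $A_2, B_2, B_1, A_1$ in the appropriate slot.

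Friendliness is then verified by direct case analysis. Under this rule, almost every consecutive pair $(e_k, e_{k+1})$ shares a vertex, so the tree-path between them is empty and the condition for this $k$ is automatic. The pairs $(k, k+1)$ with non-empty connecting path are only the ``boundaries'' at which an inserted leg-block begins (transitioning from the preceding edge to a leaf edge of the leg), and by the construction the path at such a boundary consists exactly of the leg's interior edges plus at most one spine edge, which are in turn the consecutive pairs $\{k+2, k+3\}, \{k+4, k+5\}, \ldots$ required by the definition of friendliness. The boundary pair $\{0,1\}$ causes no trouble because edge $1$ is always a leaf edge of $T$ (either $e_1$ when $v_1$ carries no even-length legs, or the outermost edge of such a leg otherwise), and similarly for edge $n$.

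The main obstacle is writing down the insertion rule uniformly enough to cover all corner cases: several legs of the same parity at one branch vertex (requiring an explicit nesting order); legs at $v_1$ or $v_{m-1}$, whose insertion blocks abut the start or end of the numbering; and interactions between legs at two consecutive spine vertices. Once the rule is precisely specified, the friendliness check is essentially bookkeeping---each non-empty path between consecutive edges lines up with pairs in the numbering---but articulating the rule in a form that survives all these corner cases is the real content of the proof.
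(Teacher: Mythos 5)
Your overall strategy is the same as the paper's: fix a spine (the paper's «ствол») containing all vertices of degree at least $3$, observe that everything hanging off it is a pendant path, sort the legs at each spine vertex by the parity of their length, insert their edges around the spine edges, and verify friendliness by pairing the edges of each connecting path into blocks $\{k+2s,\,k+2s+1\}$. The difference lies in the insertion rule, and there your proposal contains a genuine error: the «nested zigzag» you prescribe for several same-parity legs is correct for \emph{even} legs but fails for \emph{odd} ones. Concretely, take the spine $v_0v_1v_2$ with two legs of length $3$ at $v_1$, say $C_1,C_2,C_3$ and $F_1,F_2,F_3$ (here $C_1,F_1$ are incident to $v_1$ and $C_3,F_3$ are the leaf edges). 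Your rule gives the order $e_1,\,C_3,\,F_3,\,F_2,\,F_1,\,C_2,\,C_1,\,e_2$, i.e.\ numbers $1,\dots,8$. The path between edges $1=e_1$ and $2=C_3$ consists of $C_1$ and $C_2$, which carry numbers $7$ and $6$: it contains $6$ but not its partner $5=F_1$, and contains $7$ but not its partner $8=e_2$, so $k=1$ violates the definition. The correct rule (which is what the paper uses, in mirror image) is \emph{asymmetric} in the parity: odd legs must be listed sequentially, each as a contiguous block ($C_3,C_2,C_1,F_3,F_2,F_1$, after which the path for the pair $(1,2)$ is $\{3,4\}$ and for the pair $(4,5)$ is $\{6,7\}$, both admissible), and only the even legs get the nested two-pass order.

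Beyond this, you explicitly leave the rule unspecified for legs of unequal lengths, for legs near the ends of the spine, and for interactions across consecutive spine vertices, and you concede that «articulating the rule in a form that survives all these corner cases is the real content of the proof». That self-assessment is accurate: once the rule is fixed the verification really is bookkeeping of the kind you describe (and your observations about edges $1$ and $n$ being leaf edges of $T$ are sound and relevant), but the rule you actually wrote down is not the right one, so the proposal as it stands does not constitute a proof.
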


Пример дерева, дружественного простому пути согласно теореме \ref{low_branchiness}, приведён на рисунке \ref{fig:low-branchiness-sample}.

\begin{theorem}
    \label{parity_and_center}
    Пусть в дереве существует вершина, расстояния от которой до всех висячих вершин одинаковы. Пусть также степени всех вершин, не являющихся висячими, чётны. Тогда это дерево дружественно простому пути.
\end{theorem}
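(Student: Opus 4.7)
\textit{План доказательства.} Построим дружественную нумерацию явно, используя оба условия теоремы. Так как все внутренние вершины имеют чётные степени, в каждой такой вершине можно разбить инцидентные ей рёбра на пары: в центре $v$ произвольно разбиваем $\deg v$ рёбер на $\deg v/2$ пар, а в любой другой внутренней вершине $u$ объединяем в пару ребро, ведущее к $v$, с одним из дочерних рёбер, а остальные дочерние рёбра (их число чётно, так как $\deg u$ чётно и одно ребро родительское) разбиваем на пары между собой. Это разбиение индуцирует разложение множества рёбер дерева на простые пути, попарно соединяющие висячие вершины.

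Далее нумеруем рёбра следующим образом: внутри каждого такого пути рёбра получают последовательные номера вдоль него, так что соседние по номеру рёбра имеют общую вершину и путь между ними пуст (и условие дружественности выполняется для них тривиально). Сами пути упорядочиваются и ориентируются так, чтобы переход между последним ребром одного пути и первым ребром следующего либо имел пустой путь (например, когда оба ребра инцидентны $v$), либо давал путь из одной полной пары того же типа, что и переход: пары номеров $(2i-1, 2i)$ при нечётном $k$ и пары $(2i, 2i+1)$ при чётном. Для этого имеет смысл поместить рёбра, инцидентные $v$, на последовательные позиции в <<центре>> нумерации, а листовые рёбра --- по её краям; например, для звезды с четырьмя ветвями длины $2$ такая расстановка даёт непустые переходы только при $k=1$ и $k=n-1$, и каждый из этих переходов --- в точности одна пара центральных рёбер, образующая полную пару нужного типа.

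Главное препятствие --- доказать, что такое упорядочение всегда осуществимо в общем случае. Я планирую вести индукцию по глубине $d$ дерева (общему расстоянию от $v$ до висячих вершин); база $d \leq 1$ тривиальна (пустое дерево или звезда). На шаге индукции разбиваем соседей $v$ на пары и для каждой пары рассматриваем объединённое поддерево, полученное склейкой двух соответствующих поддеревьев через $v$; в этом поддереве $v$ имеет степень $2$ и оно по-прежнему удовлетворяет условиям теоремы. Применяем к каждому такому поддереву индукционное предположение в усиленной форме, требующей, чтобы построенная нумерация начиналась и заканчивалась рёбрами, инцидентными $v$; склейка таких поднумераций через $v$ тогда даёт искомую нумерацию всего дерева. Самый тонкий момент --- доказательство этой усиленной формы и проверка согласованности типов пар (A и B) на всех стыках; вероятно, потребуется вспомогательная индукция по $\deg v$ с аккуратным выбором порядка обхода пар соседей центра.
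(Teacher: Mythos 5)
Ваш план идёт принципиально другим путём, чем статья: вы пытаетесь напрямую построить дружественную нумерацию, разложив дерево на пути между висячими вершинами и склеивая куски, отвечающие парам ветвей при центре $v$; в статье же индукцией по расстоянию $\rho$ до листьев (с удалением сразу \emph{всех} листьев, то есть переходом от $G$ к $G'$) строится «листокрайная» биекция $\varphi\colon E(G)\to\{1,\dots,|E(G)|\}$ в обратную сторону, а возврат к исходному определению происходит через симметричность дружественности (утверждение \ref{symm}), опирающуюся на теорему \ref{a_theorem}. Прямое построение было бы даже ценнее, но в предложенном виде оно не проходит: усиленная форма индукционного предположения --- «существует дружественная нумерация, начинающаяся и заканчивающаяся рёбрами, инцидентными $v$» --- неверна уже для простейшего склеенного поддерева, пути $a$--$m_1$--$v$--$m_2$--$b$. Действительно, пусть рёбра $m_1v$ и $vm_2$ получили номера $1$ и $4$, тогда листовые рёбра получили номера $2$ и $3$. Если номер $3$ получило ребро $am_1$, то путь между рёбрами $1$ и $2$ состоит из одного ребра с номером $4$, то есть содержит ребро $4$ без ребра $3$ --- нарушение при $s=1$. Если же номер $2$ получило ребро $am_1$, то путь между рёбрами $2$ и $3$ состоит из рёбер с номерами $1$ и $4$; при $s=1$ он должен содержать и ребро $5$, а при $s=-1$ --- ребро $0$, но таких рёбер нет. В обоих случаях дружественность нарушена, то есть требуемой вами нумерации не существует, и шаг индукции разваливается.

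Есть и структурные проблемы. Во-первых, склейка пары ветвей через $v$ даёт поддерево \emph{той же} глубины $d$, так что индукционное предположение по $d$ к нему применять нельзя; вспомогательная индукция по $\deg v$ тоже не спасает, поскольку случай $\deg v=2$ при $d\ge 2$ ни к чему не сводится --- а это в точности контрпример выше. Во-вторых, два описанных вами способа нумерации противоречат друг другу: при последовательной нумерации вдоль путей разложения два ребра каждого пути, инцидентные $v$, оказываются в середине его блока номеров, тогда как ваш же работающий пример для звезды с четырьмя ветвями длины $2$ помещает все рёбра при $v$ на соседние позиции в центре, а листовые рёбра --- по краям; такая нумерация уже не является конкатенацией последовательно занумерованных путей разложения. Работоспособный инвариант, по-видимому, --- «рёбра, смежные с листьями, получают наибольшие номера, причём в порядке, обратном порядку их родительских рёбер»; но это как раз листокрайность и противоход из доказательства статьи, и тогда потребуется весь проведённый там разбор случаев (в частности, чётность степеней используется именно при проверке незацепленности кограниц вершин, смежных с листьями), а не только условие на пары $(k+2s,k+2s+1)$ для соседних по номеру рёбер.
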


Пример дерева, дружественного простому пути согласно теореме \ref{parity_and_center}, приведён на рисунке \ref{fig:parity-sample}.

Сформулируем гипотезы, которые, по-видимому, можно аналогично теореме \ref{parity_and_center}.

\begin{hypothesis}
    \label{g::d4}
    Любое дерево диаметра не более 4-х дружественно простому пути. 
\end{hypothesis}
\begin{hypothesis}
    \label{g::odd}
    Любое дерево, в котором степени всех вершин нечётны и существует вершина, расстояния от которой до всех висячих вершин одинаковы, дружественно простому пути. 
\end{hypothesis}

Сформулируем открытый вопрос \ref{q:all}, с которым связан ещё один результат работы. Для формулировки этого результата понадобится несколько определений, см. рис. \ref{fig:simple-def}.

\begin{figure}[h]

\center{\begin{minipage}{0.85\linewidth}
\caption{Иллюстрация к определениям «не цепляется за», «не зацеплены», «кограница». \label{fig:simple-def}}
\end{minipage}}

\vspace{3ex}
\begin{minipage}{0.60\linewidth}

\center{\includegraphics[height=2cm]{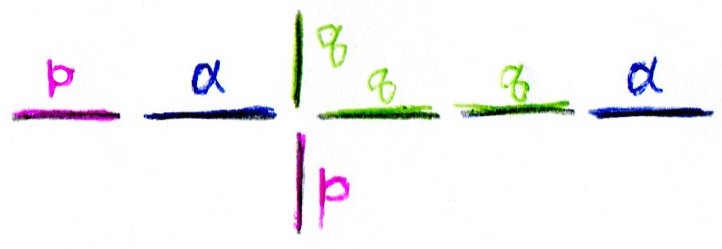}}

Множества $p$ и $q$ не зацеплены.

Множества $a$ и $q$ не зацеплены.

Множество $p$ цепляется за множество $a$.

Множество $a$ не цепляется за множество $p$.
\end{minipage}
\hfill.
\begin{minipage}{0.35\linewidth}
    \center{\includegraphics[height=2cm]{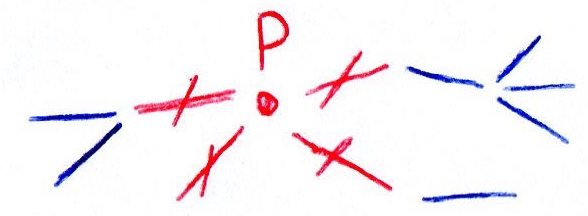}}
    
    Кограница вершины $P$.
\end{minipage}
\end{figure}

Пусть  $p$ и $q$ --- два подмножества множества рёбер некоторого дерева.

\begin{definition}[Не цепляется за]
	Множество $p$ {\it не цепляется за} множество $q$, если $p\cap q=\varnothing$, и для любых двух рёбер из $p$ соединяющий их путь в дереве содержит чётное число рёбер из $q$.
\end{definition}

\begin{definition}[Не зацеплены]
	Множества $p$ и $q$ {\it не зацеплены}, если $p$ не цепляется за $q$ и $q$ не цепляется за $p$.
\end{definition}

\begin{definition}[Кограница]
	Кограница $\delta P$ вершины $P$ --- это множество рёбер, выходящих из неё. 
\end{definition}

\begin{definition}[Дружественность в общем смысле]
    Дерево $G_1$ называется {\it дружественным} дереву $G_2$, если существует биекция между множествами их рёбер, при которой для любых двух вершин $P,Q$ графа $G_1$, соединённых путём из чётного числа рёбер, наборы рёбер в $G_2$, соответствующие $\delta P$ и $\delta Q$, не зацеплены в $G_2$. Соответствующую биекцию также будем называть {\it дружественной}.
\end{definition}

\begin{figure}
   \center{\includegraphics[height=2cm]{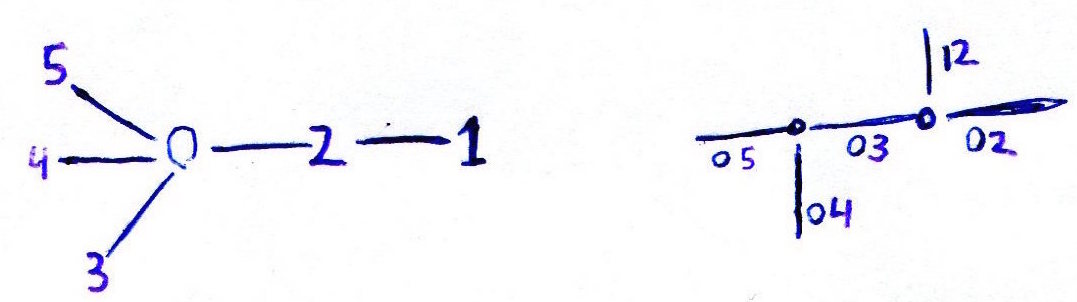}}
   \caption{\label{fig:friends}Пример дружественной биекции.}
   \center{\includegraphics[height=2cm]{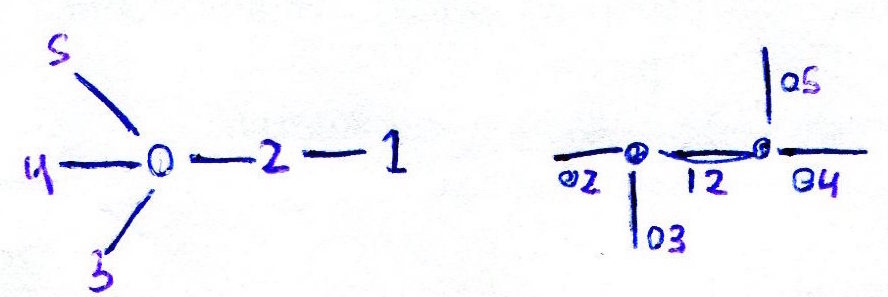}}
   \caption{\label{fig:unfriends}Пример не дружественной биекции.}
\end{figure}

На рисунках \ref{fig:friends}, \ref{fig:unfriends} приведены примеры дружественной биекции и биекции, не являющейся дружественной. То, что дерево $G$ дружественно простому пути, равносильно тому, что простой путь дружественен дереву $G$ в общем смысле.

\begin{statement}
    \label{symm}
    Дружественность симметрична. Более того, биекция, обратная к дружественной биекции, является дружественной.
\end{statement}

Утверждение \ref{symm} следует из теоремы~\ref{a_theorem} (Аввакумова). Однако прямое комбинаторное доказательство неизвестно.

Очевидно, что два дерева могут быть дружественными только когда у них одинаковое количество рёбер.

\begin{question}
\label{q:all}
Какие деревья дружественны любому дереву с таким же количеством рёбер?
\end{question}

В этой работе дружественность любому дереву с таким же количеством рёбер доказана для определённого класса деревьев. Опишем этот класс.

\begin{definition}[$n$-звезда]
    Дерево $(V, E)$ называется {\it $n$-звездой}, если
	$V = \left\{ 0, 1, \ldots, n \right\}$  
	и 
	$E = \left\{(0, 1), (0,2), \ldots, (0, n)\right\}$.
{\sloppy

}
\end{definition}

Известно, см. \cite{tasks}, что $n$-звезда дружественна любому дереву, в котором $n$ рёбер.

\begin{notation}[$CB(n_1,n_2)$\footnote{CB
                от английского contact binary (тесная двойная система) --- астрономический термин для двойных звёзд, которые могут обмениваться массой.}]
	Обозначим через $CB(n_1, n_2)$ дерево, представляющее собой объединение по одному общему ребру $n_1$-звезды и $n_2$-звезды.
\end{notation}

Обозначение «$CB(n_1, n_2)$» проиллюстрировано на рисунке \ref{fig:CB}. Отметим, что $CB(n, 1)$ --- это $n$-звезда.

\begin{figure}[h]
\begin{minipage}{0.32\linewidth}
    \center{\includegraphics[height=2cm]{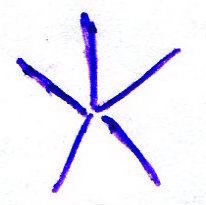}}

    $5$-звезда
\end{minipage}
\hfill
\begin{minipage}{0.32\linewidth}
    \center{\includegraphics[height=2cm]{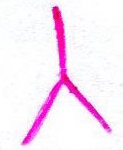}}

    $3$-звезда
\end{minipage}
\hfill
\begin{minipage}{0.32\linewidth}
    \center{\includegraphics[height=2cm]{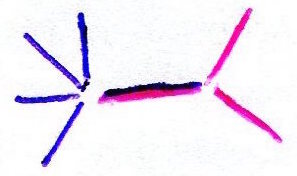}}
    $CB(5,3)$
\end{minipage}
\caption{\label{fig:CB}
Иллюстрация к обозначениям «$n$-звезда», «$CB(n_1, n_2)$».}
\end{figure}

В работе доказано следующее утверждение.

\begin{statement}
    \label{CB_n_4}
    Каждое из деревьев $CB(n, 2)$, $CB(n, 3)$, $CB(n, 4)$ дружественно любому дереву с тем же количеством рёбер.
\end{statement}

\begin{note}
    Существует дерево, не дружественное $CB(5,5)$, количество рёбер которого совпадает с количеством рёбер дерева $CB(5,5)$, см. рис. \ref{unfriendly}.
\end{note}

Утверждение \ref{CB_n_4} доказывается с помощью следующей теоремы.

\begin{theorem}[Критерий дружественности дереву диаметра 3]
	\label{3_criteria}
	Дерево $CB(n_1,n_2)$ дружественно дереву $G$ тогда и только тогда, когда, во-первых, $|E(G)| = n_1 + n_2 - 1$, и, во-вторых, в $G$ существуют два поддерева $H_1$ и $H_2$, которые пересекаются ровно по одному ребру, такие что $|E(H_1)| = n_1$, $|E(H_2)| = n_2$.
\end{theorem}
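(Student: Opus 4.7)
\medskip\noindent\textbf{План доказательства.} Обозначу центральные вершины дерева $CB(n_1,n_2)$ через $A$ и $B$, а смежные с ними висячие вершины --- через $a_1,\ldots,a_{n_1-1}$ и $b_1,\ldots,b_{n_2-1}$ соответственно. Все вершины, кроме $A$ и $B$, --- висячие, а сами $A$ и $B$ смежны. Поэтому любая пара различных вершин дерева, лежащих на чётном расстоянии, содержит хотя бы одну висячую вершину. Кограница висячей вершины --- одноэлементное множество, а одноэлементное множество заведомо не цепляется ни за какое не пересекающееся с ним множество рёбер. Значит, дружественность биекции $\phi\colon E(CB(n_1,n_2))\to E(G)$ равносильна тому, что в $G$ множество $\phi(\delta A)$ не цепляется за $\{\phi(Bb_j)\}$ при каждом $j$ и $\phi(\delta B)$ не цепляется за $\{\phi(Aa_i)\}$ при каждом $i$.

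Далее использую следующее наблюдение: если $p$ --- подмножество рёбер дерева, а ребро $e$ не лежит в $p$, то $p$ не цепляется за $\{e\}$ тогда и только тогда, когда $e$ не входит в наименьшее поддерево $\langle p\rangle$, содержащее все рёбра из $p$. Обозначу $H_1:=\langle\phi(\delta A)\rangle$, $H_2:=\langle\phi(\delta B)\rangle$. Так как $E(CB(n_1,n_2))=\delta A\cup\delta B$ и $\delta A\cap\delta B=\{AB\}$, в $G$ выполнено $E(G)=\phi(\delta A)\cup\phi(\delta B)$ с общим ребром $\phi(AB)$. Сформулированные выше требования превращаются в $E(H_1)\subseteq\phi(\delta A)$ и $E(H_2)\subseteq\phi(\delta B)$, а обратные включения верны по построению; итого получаются равенства. Отсюда необходимость: $H_1,H_2\subseteq G$ --- поддеревья с $|E(H_1)|=n_1$, $|E(H_2)|=n_2$, пересекающиеся ровно по одному ребру $\phi(AB)$.

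Для достаточности возьму поддеревья $H_1,H_2\subseteq G$ нужных размеров с единственным общим ребром $e_0$. Из $|E(G)|=n_1+n_2-1$ получу $E(G)=E(H_1)\cup E(H_2)$. Положу $\phi(AB):=e_0$ и выберу произвольные биекции между $\{Aa_i\}$ и $E(H_1)\setminus\{e_0\}$, а также между $\{Bb_j\}$ и $E(H_2)\setminus\{e_0\}$. По построению $\phi(\delta A)=E(H_1)$ уже является поддеревом, так что $\langle\phi(\delta A)\rangle=H_1$ не содержит образов $\phi(Bb_j)$; симметрично для $B$. Значит, построенная биекция дружественна.

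Главная концептуальная трудность --- перевод условия <<не цепляется за одноэлементное множество>> на язык минимального натянутого поддерева. После этого всё остальное сразу вытекает из того, что в $CB(n_1,n_2)$ не более двух вершин имеют многоэлементную кограницу, и они смежны (то есть никогда не лежат на чётном расстоянии).
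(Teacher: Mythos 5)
Ваше доказательство верно и по существу совпадает с доказательством в работе: в обе стороны всё сводится к тому, что образы кограниц двух центральных вершин обязаны быть связными множествами рёбер (у вас это выражено через минимальное натянутое поддерево $\langle\phi(\delta A)\rangle$, в работе --- через противоречие с кlimитом «зацепленности» с кограницей висячей вершины $b_j$ на чётном расстоянии от $A$), а конструкция биекции в доказательстве достаточности идентична. Разница лишь в оформлении, не в идее.
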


 Теорема \ref{3_criteria} несложная. Её доказательство для частного случая впервые было использовано в статье \cite{Avvakumov} как часть доказательства контрпримера к гипотезе Ландо\footnote{Подробнее о гипотезе Ландо в пункте \ref{lando-intro}.}, см. также \cite{B2}. Сама теорема \ref{3_criteria} впервые была сформулирована Матвеем Осиповым, но доказательство так и не было опубликовано, поэтому оно включено в данную работу.
        \subsection{Дружественность деревьев и гипотеза Ландо}
            \graphicspath{{03/}}

\label{lando-intro}
Понятие {\it дружественности} появилось при обобщении следующей гипотезы.

\begin{hypothesis}[Ландо]
    \label{lando}
    Пусть $M$ и $N$ --- два равномощных набора непересекающихся окружностей на сфере. Тогда существует пара кусочно-линейных сфер в трёхмёрном пространстве, которые трансверсально пересекаются по конечному набору непересекающихся окружностей, расположенных в одной сфере как $M$, а в другой --- как $N$.
\end{hypothesis}

\begin{figure}[h]
	\begin{minipage}{0.55\linewidth}
    \center{\includegraphics[height=5cm]{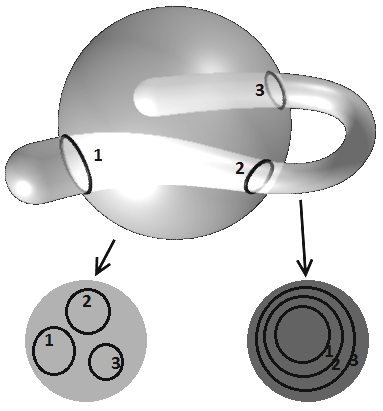}}
    \end{minipage}
    \hfill
    \begin{minipage}{0.4\linewidth}
\caption{\label{fig:lando}Наборы окружностей на сфере и соответствующее им пересечение кусочно-линейных сфер, иллюстрация из \cite{tasks}.}
	\end{minipage}
\end{figure}

Наборы окружностей на сфере и соответствующее им пересечение кусочно-линейных сфер изображены на рисунке \ref{fig:lando}. Дадим формальные определения терминам «трансверсальное пересечение» и «расположен как».

\begin{definition}[Трансверсальность пересечения, см. \cite{tasks}]
Обозначим через $B(x,\rho)\subset \mathbb{R}^3$ шар радиуса $\rho$ с центром в $x$.
Пересечение двух кусочно-линейных сфер $S,T\subset \mathbb{R}^3$ {\itshape трансверсально}, если для любой точки $x\in S\cap T$ существует $\rho>0$
такое, что и $B(x,\rho)\setminus S$, и $B(x,\rho)\cap (T\setminus S)$ состоят из двух связных компонент, а каждая компонента множества $B(x,\rho)\setminus S$ содержит компоненту множества
$B(x,\rho)\cap (T\setminus S)$.
\end{definition}

\begin{definition}[Расположен как]
Предположим, что $M$ и $N$ --- наборы из одинакового числа окружностей на кусочно-линейных сферах $S$ и $T$.
Тогда $M$ {\itshape расположен в $S$ как $N$ в $T$}, если есть биекция между связными компонентами дополнений $S\setminus M$ и $T\setminus N$,
при которой две связные компоненты дополнения $S\setminus M$ соседние тогда и только тогда, когда две соответствующие связные компоненты дополнения $T\setminus N$ соседние.
\end{definition}

Взаимное расположение окружностей на сфере можно описать с помощью графов.

\begin{definition}[Двойственный граф]
Пусть $M$ --- объединение непересекающихся окружностей в кусочно-линейной сфере $S$. Определим {\it двойственный к $M$} граф $G=G(S,M)$ следующим образом. Вершины --- связные компоненты дополнения $S\setminus M$. Вершины соединяются ребром, если соответствующие компоненты --- соседи.
\end{definition}

\begin{figure}[h]
	\begin{minipage}{0.55\linewidth}
    \center{\includegraphics[height=5cm]{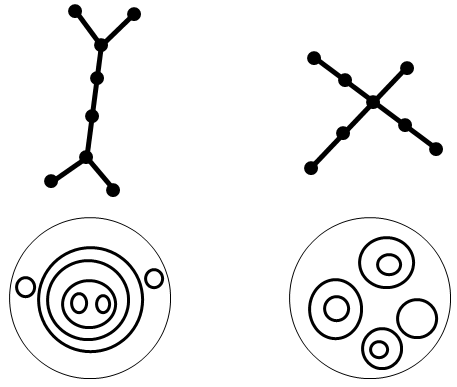}}
    \end{minipage}
    \hfill
    \begin{minipage}{0.4\linewidth}
\caption{\label{fig:sp-tr}Соответствие между наборами окружностей на сфере и деревьями, иллюстрация из \cite{tasks}.}
\end{minipage}
\end{figure}

Можно показать, что граф, двойственный к набору непересекающихся окружностей на сфере, является деревом \cite{Avvakumov}. Примеры двойственных графов изображены на рисунке \ref{fig:sp-tr}. Критерий существования кусочно-линейных сфер, реализующих то или иное пересечение, можно описать на языке деревьев.

\begin{theorem}[Аввакумова]
\label{a_theorem}
Пусть $M$ и $N$ --- два равномощных набора непересекающихся окружностей на сфере. Тогда пара кусочно-линейных сфер в трёхмёрном пространстве, которые трансверсально пересекаются по конечному набору непересекающихся окружностей, расположенных в одной сфере как $M$, а в другой --- как $N$, существует тогда и только тогда, когда деревья, двойственные к наборам $M$ и $N$, дружественны.
\end{theorem}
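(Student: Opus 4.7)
План доказательства состоит в анализе двух импликаций отдельно через общую естественную биекцию между множеством окружностей пересечения $S\cap T$ и рёбрами обоих двойственных деревьев. Каждая окружность $C_i\in S\cap T$ лежит в $S$ и разделяет две соседние компоненты $S\setminus M$, задавая ребро в $G(S,M)$; симметрично она задаёт ребро в $G(T,N)$. Полученная биекция $\varphi$ и будет кандидатом на дружественную в смысле определения из текста.

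Для направления «только тогда» основное наблюдение таково: сфера $T$ делит $\mathbb{R}^3$ на две части, поэтому каждая компонента $S\setminus M$ целиком принадлежит либо внутренней, либо внешней части. Проход по ребру в $G(S,M)$ соответствует пересечению ровно одной окружности из $S\cap T$, что меняет «сторону» относительно $T$. Отсюда вершины $P,Q\in G(S,M)$, соединённые путём чётной длины, лежат по одну сторону от $T$. Если теперь рассмотреть образы $\varphi(\delta P)$ и $\varphi(\delta Q)$ как окружности в $T$, то они ограничивают на $T$ две области, соответствующие $P$ и $Q$; для двух рёбер из $\varphi(\delta P)$ путь между ними в $G(T,N)$ отвечает пути в $T$, соединяющему точки на границе области для $P$ и пересекающему $\varphi(\delta Q)$ в точках, число которых имеет чётность, равную чётности числа входов и выходов из области для $Q$, то есть чётное. Это и даёт незацепленность.

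Для направления «тогда» план --- индукция по числу окружностей. Базовый случай (одна окружность) тривиален. На шаге индукции надо выбрать в одном из деревьев, например в $G(S,M)$, лист (висячее ребро) и соответствующую ему по $\varphi$ окружность. Из условия дружественности следует, что соответствующее ребро в $G(T,N)$ также висячее (иначе коразмерностные ограничения были бы нарушены при проверке для пар вершин). Тогда эту окружность можно удалить, слив две соседние компоненты в каждом дереве, получив уменьшенную инстанцию задачи с по-прежнему дружественной биекцией. По индукции строим реализацию, после чего локально в малом шаре добавляем удалённую окружность, слегка продеформировав $S$ и $T$ --- это и есть «PL-хирургия», совместимая с трансверсальностью.

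Главная трудность, на мой взгляд, --- именно построительное направление и, в нём, корректный выбор висячего ребра и обоснование того, что парное ему ребро также висячее. Условие дружественности должно гарантировать, что соответствующую пару можно локально реализовать без глобальных препятствий, а также что индукция «не ломается» --- то есть уменьшенная биекция остаётся дружественной. Первое направление сводится к рутинному подсчёту чётностей, хотя и требует аккуратной формализации понятия «сторона $T$» и связности областей на $T$, ограниченных окружностями из $\varphi(\delta P)$.
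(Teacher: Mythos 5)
В самой работе эта теорема не доказывается: она приводится как внешний результат со ссылкой «Доказательство этой теоремы приведено в \cite{Avvakumov}», так что сравнивать ваш набросок можно только с оригинальной статьёй, а не с текстом данной работы. Ваше направление «только тогда» (из существования пересечения следует дружественность) по духу соответствует стандартному рассуждению: естественная биекция «окружность из $S\cap T$ $\leftrightarrow$ ребро каждого из двойственных деревьев» плюс подсчёт чётности через понятие «стороны» сферы $T$. Это правильная идея, хотя изложена она неформально: ключевое место --- почему дуга на $T$, соединяющая окрестности двух окружностей из $\varphi(\delta P)$, пересекает $\varphi(\delta Q)$ чётное число раз --- требует аккуратного гомологического (по модулю 2) обоснования с использованием того, что компонента $Q$ есть поверхность с краем $\delta Q\subset T$, целиком лежащая по одну сторону от $T$.

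В построительном направлении («тогда») у вас есть конкретная ошибка. Вы утверждаете, что при дружественной биекции висячему ребру одного дерева обязано соответствовать висячее ребро другого. Это неверно: $n$-звезда дружественна любому дереву с $n$ рёбрами (в том числе простому пути, см. ссылку на \cite{tasks} в работе), при этом в звезде все рёбра висячие, а в пути при $n\geqslant 4$ висячих рёбер только два. Кограница листа --- одноэлементное множество, а условие «не цепляется» для одноэлементных множеств выполняется тривиально, поэтому дружественность не накладывает на образ висячего ребра никаких ограничений такого рода. Следовательно, предложенная индукция обрывается уже на первом шаге: согласованно удалить «лист» в обоих деревьях, вообще говоря, нельзя. Кроме того, даже при удачном выборе окружности нужно отдельно доказывать, что после слияния соседних компонент ограниченная биекция остаётся дружественной, и главное --- в вашей схеме условие дружественности в конструкции по существу нигде не используется, тогда как именно построение пары сфер по произвольной дружественной биекции и составляет содержательную часть теоремы Аввакумова.
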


Доказательство этой теоремы приведено в \cite{Avvakumov}. С помощью неё Аввакумовым был построен первый контрпример к гипотезе Ландо, см. рис. \ref{unfriendly}. Позже Белоусовым~\cite{Belousov} был найден другой, меньший (и, по-видимому, минимальный), контрпример, см. рис. \ref{fig:b}. Cм.~также элементарное изложение основ темы в \cite{tasks}.

\begin{figure}[h]
\begin{minipage}{0.55\linewidth}
    \center{\includegraphics[height=5cm]{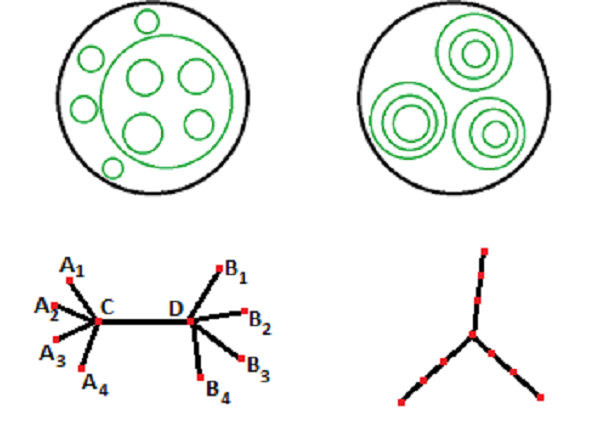}}
    \end{minipage}
    \hfill
    \begin{minipage}{0.4\linewidth}
\caption{Первый контрпример к гипотезе Ландо\label{unfriendly}, иллюстрация из \cite{tasks}.}
\end{minipage}
\end{figure}

\begin{figure}[h]
\begin{minipage}{0.7\linewidth}
    \center{\includegraphics[height=5cm]{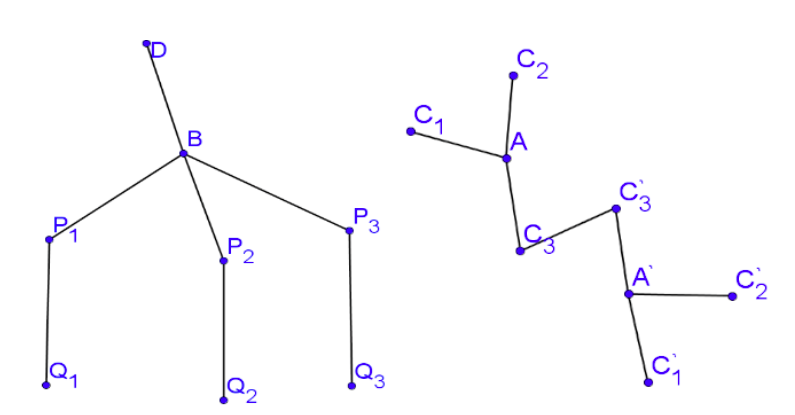}}
    \end{minipage}
    \hfill
    \begin{minipage}{0.2\linewidth}
\caption{\label{fig:b}Контрпример Белоусова, иллюстрация из \cite{Belousov}.}
\end{minipage}
\end{figure}
                
    \section{Доказательства}
        \subsection{Доказательство теоремы \ref{low_branchiness}}
            \graphicspath{{05/}}

Пусть дерево $G$ удовлетворяет условию теоремы \ref{low_branchiness}. Введём обозначения для частей этого дерева.

\begin{definition}[Ствол]
{\it Ствол} дерева $G$ --- это путь в $G$, который содержит все вершины дерева $G$ степени 3 и более и конец которого является висячей вершиной дерева $G$.
\end{definition}

Поясним, что {\it первой вершиной ствола} будем называть первую вершину соответствующего пути, {\it последней вершиной ствола} --- последнюю. Для полноты приведём доказательство следующей простой леммы.

\begin{lemma}
В дереве $G$, удовлетворяющем условию теоремы \ref{low_branchiness}, можно выбрать ствол.
\end{lemma}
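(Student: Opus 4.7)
План. По условию теоремы \ref{low_branchiness} в дереве $G$ найдётся путь $P$, содержащий все вершины степени $\geq 3$. Если хотя бы один из концов $P$ является висячей вершиной $G$, то сам $P$ подходит в качестве ствола, и доказывать нечего. Остаётся разобрать случай, когда оба конца $P$ имеют степень $\geq 2$ в $G$; идея состоит в том, чтобы продолжить $P$ с одного из концов до висячей вершины, сохранив свойство «проходит через все вершины степени $\geq 3$».

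Возьму конец $v$ пути $P$ с $\deg_G v \geq 2$. На $P$ из $v$ выходит ровно одно ребро, поэтому у $v$ обязательно найдётся сосед $w \notin P$. Я рассмотрю компоненту связности $T_w$ графа $G \setminus \{v\}$, содержащую $w$. Поскольку $v$ --- конец пути $P$ и $G$ --- дерево, $T_w$ не пересекается с $P$. По условию теоремы все вершины $G$ степени $\geq 3$ лежат на $P$; значит, у любой вершины из $T_w$ степень в $G$ не превосходит $2$. Следовательно, $T_w$ --- дерево с максимальной степенью $\leq 2$, то есть простой путь (возможно, из одной вершины). Его второй конец $u$ (отличный от $w$ или совпадающий с ним) оказывается висячей вершиной $G$: внутри $T_w$ у $u$ степень не больше $1$, а единственное потенциальное ребро вне $T_w$ шло бы в $v$ и замыкало бы цикл в $G$. Дополнив $P$ ребром $vw$ и путём внутри $T_w$ от $w$ до $u$, получу искомый ствол.

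Серьёзных препятствий я не предвижу --- рассуждение чисто структурное и полностью опирается на условие теоремы о том, что все вершины большой степени сосредоточены на одном пути. Единственная требующая аккуратности деталь --- это проверка того, что $T_w$ действительно не пересекается с $P$ и что $u$ действительно висячая в $G$; оба утверждения немедленно следуют из отсутствия циклов в $G$ и того, что $v$ --- концевая вершина $P$.
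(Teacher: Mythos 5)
Ваше рассуждение верно и по существу совпадает с доказательством в статье: там тоже берётся путь из условия теоремы и продолжается из невисячего конца до висячей вершины (в статье это делается пошагово--жадно, у вас --- сразу целиком, с явным обоснованием через компоненту $T_w$, которая оказывается путём с висячим дальним концом). Ваша версия лишь чуть подробнее аргументирует, почему продолжение корректно завершается в висячей вершине.
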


\begin{proof}
По условию теоремы в дереве $G$ существует путь $p$, который содержит все вершины степени $3$ и более. Пусть его конец $Q$ не является висячей вершиной. Тогда у $Q$ степень 2 или более, и существует ребро $e$, выходящее из $Q$ и не принадлежащее пути $p$. Продолжим $p$ по ребру $e$. Будем продолжать $p$, пока не дойдём до висячей вершины. Получим путь $p_1$, который содержит все вершины дерева $G$ степени 3 и более (поскольку содержит $p$) и чей конец является висячей вершиной.
\end{proof}

\begin{figure}[h]
\begin{minipage}{0.49\linewidth}
\center{\includegraphics[height=4cm]{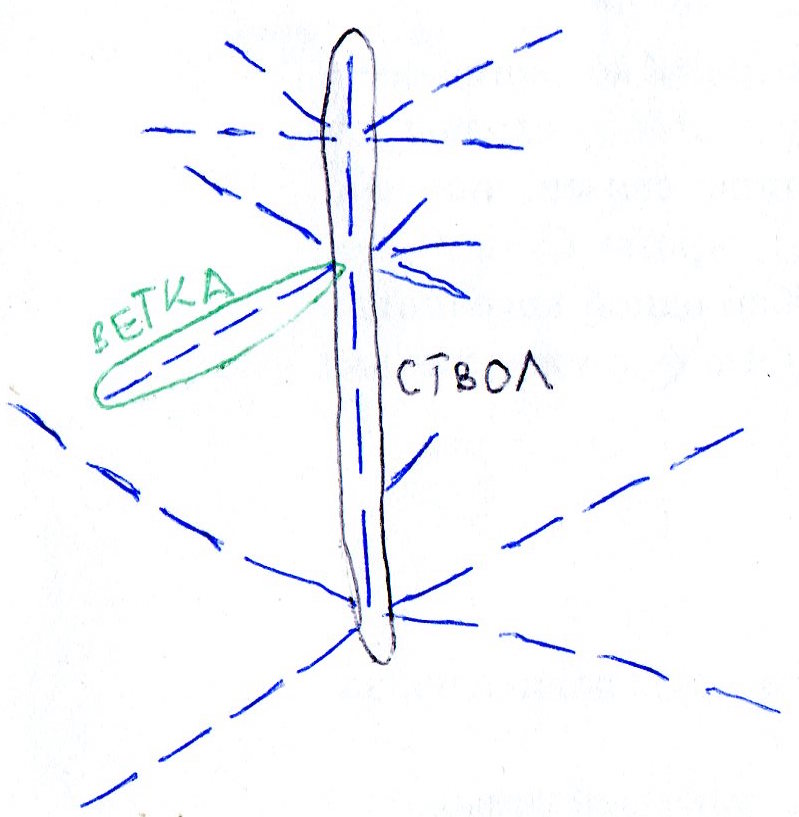}}
\end{minipage}
\hfill
\begin{minipage}{0.49\linewidth}
\center{\includegraphics[height=4cm]{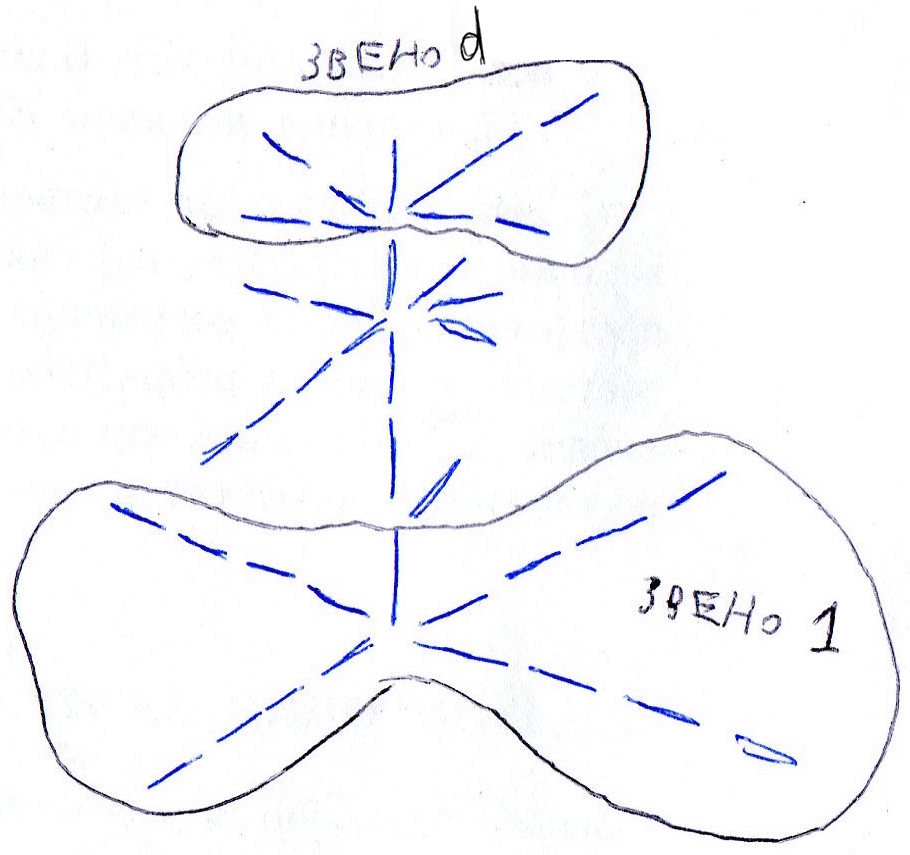}}
\end{minipage}
\caption{\label{fig:branch}Иллюстрация к определениям «ствол», «ветка», «звено».}
\end{figure}

Выберем в дереве $G$ ствол и зафиксируем его. В дальнейшем под словом «ствол» мы будем подразумевать именно этот зафиксированный ствол.

\begin{definition}[Ветка]
{\it Ветка} дерева $G$ --- это путь в дереве $G$
\begin{itemize}
\item первая вершина которого лежит на стволе,
\item последняя вершина которого является висячей,
\item рёбра которого не лежат на стволе.
\end{itemize}
Ветки, в которых чётное количество рёбер, будем называть {\it чётными}, а ветки, в которых нечётное --- {\it нечётными}. Ближайшее к стволу ребро ветки будем называть {\it первым}, соседнее с ним --- {\it вторым}, и так далее. Висячее ребро ветки будем называть {\it последним} ребром ветки.
\end{definition}

\begin{definition}[Звено]
{\it $m$-м звеном дерева $G$} будем называть подграф дерева $G$, содержащий
\begin{itemize}
\item $m$-ю вершину ствола, то есть вершину ствола, лежащую на расстоянии $m - 1$ от первой вершины ствола,
\item все ветки, выходящие из $m$-й вершины,
\item ребро ствола, выходящее из $m$-й вершины и лежащее на пути из неё в последнюю вершину ствола.
\end{itemize}
\end{definition}

Обозначим через $d$ количество рёбер в стволе дерева $G$. Тогда каждое ребро дерева $G$ лежит ровно в одном из звеньев $1$, $2$, $\dots$, $d$. \footnote{Именно для этого нам понадобилось, чтобы последняя вершина ствола была висячей. Иначе рёбра веток, начинающихся в последней вершине ствола, не лежат ни в одном из звеньев.}

\smallskip

{\it Описание дружественной нумерации рёбер дерева $G$}.

Обозначим через $n_1$, $n_2$, $\dots$, $n_d$ количества рёбер в звеньях $1$, $2$, $\dots$, $d$, соответственно. Далее рёбра первого звена получат номера от $1$ до $n_1$, рёбра второго звена получат номера от $n_1 + 1$ до $n_1 + n_2$, $\dots$, рёбра последнего звена получат номера от $|E(G)| - n_d + 1$ до $|E(G)|$.

Опишем нумерацию рёбер внутри звена, см. рис. \ref{fig:enum-algo}, рис. \ref{fig:enum-draw}.

\begin{algorithm}
\caption{\label{fig:enum-algo}Алгоритм нумерации рёбер внутри звена.}
\vspace{3ex}
\begin{minipage}{0.40\linewidth}
\For{$i \leftarrow  1, 2, \ldots, t_1$}{
	
	\For{$j \leftarrow  1, 2, \ldots, L_{1,i}$}{
	
		$odd[i][j] \leftarrow k$ 
	
		$k \leftarrow k + 1$ 
	}
}

$e_{trunk} \leftarrow k$

$k \leftarrow k + 1$

\For{ $i \leftarrow  1, 2, \ldots, t_2$ }{

		$even[i][1] \leftarrow k$

		$k \leftarrow k + 1$
	}

\For{ $i \leftarrow t_2, t_2 - 1, \ldots, 1$ }{
	
	\For{$j \leftarrow  2, 3, \ldots, L_{2,i}$}{
	
		$even[i][j] \leftarrow k$
	
		$k \leftarrow k + 1$
	}
}
\end{minipage}
\hfill
\begin{minipage}{0.49\linewidth}
{\bf Обозначения}. Здесь «$k$» означает текущий номер, стрелка означает операцию присвоения. Нечётные ветки обозначены через $odd[1]$, $odd[2]$, $\ldots$, $odd[t_1]$, их длины --- через $L_{1,1}$, $L_{1,2}$, $\ldots$, $L_{1,t_1}$. Рёбра $i$-ой нечётной ветки обозначены через $odd[i][1]$, $odd[i][2]$, $\ldots$, $odd[i][L_{1, i}]$. Обозначения чётных веток $even[1]$, $even[2]$, $\ldots$, $even[t_2]$, их рёбер и длин $L_{2,1}$, $L_{2,2}$, $\ldots$, $L_{2,t_2}$ аналогичны. Ребро звена, лежащее на стволе, обозначено через $e_{trunk}$.
\end{minipage}
\end{algorithm}

\begin{figure}[h]
\center{
	\begin{minipage}{0.33\linewidth}
	\center{\includegraphics[height=4cm]{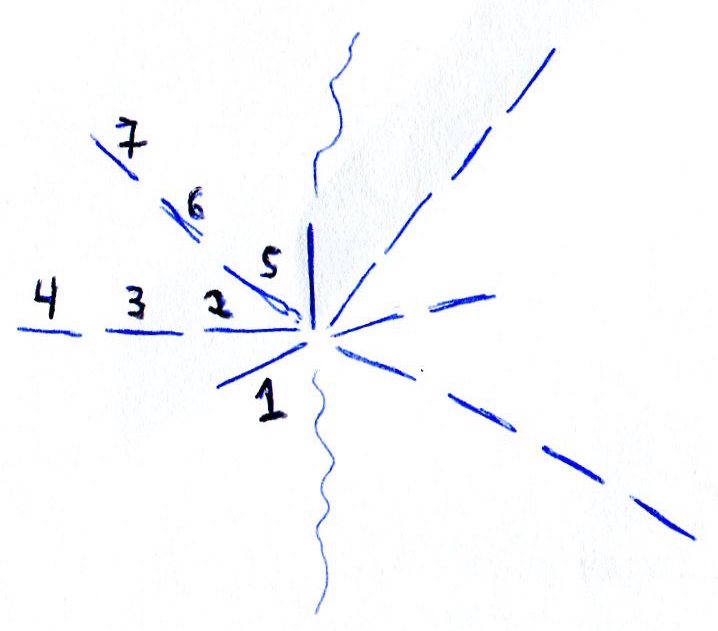}}
	\center{\includegraphics[height=4cm]{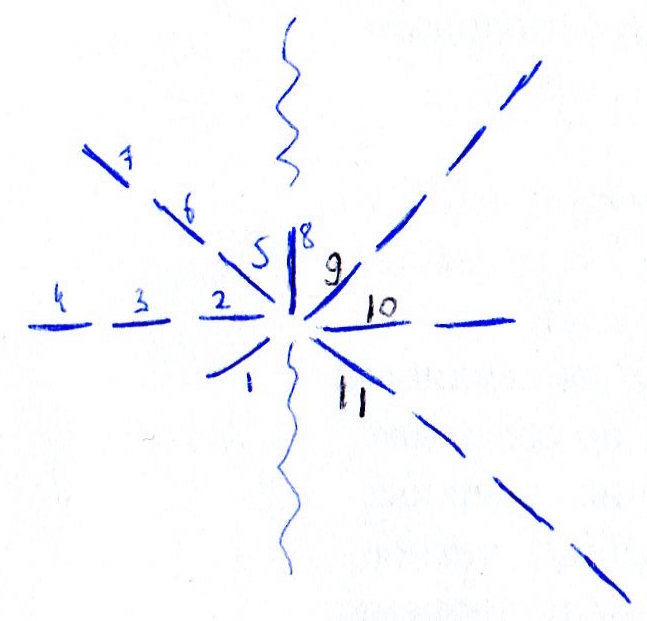}}
	\end{minipage}
	\begin{minipage}{0.33\linewidth}
	\center{\includegraphics[height=4cm]{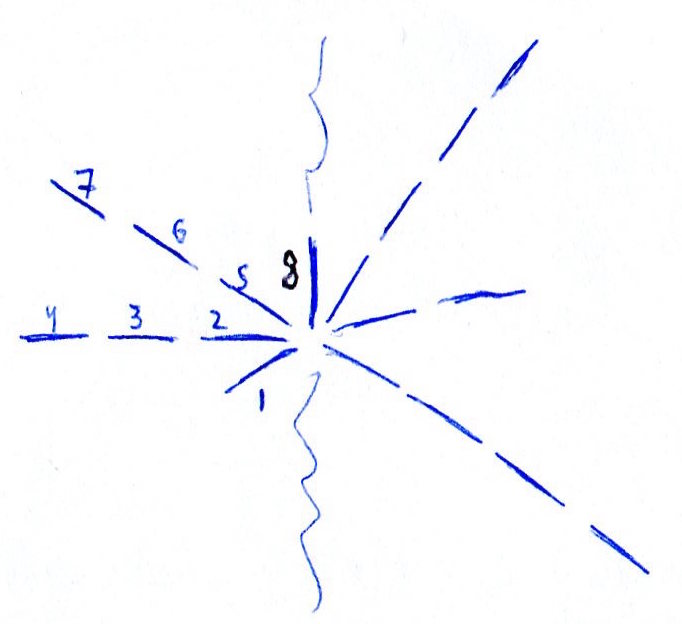}}
	\center{\includegraphics[height=4cm]{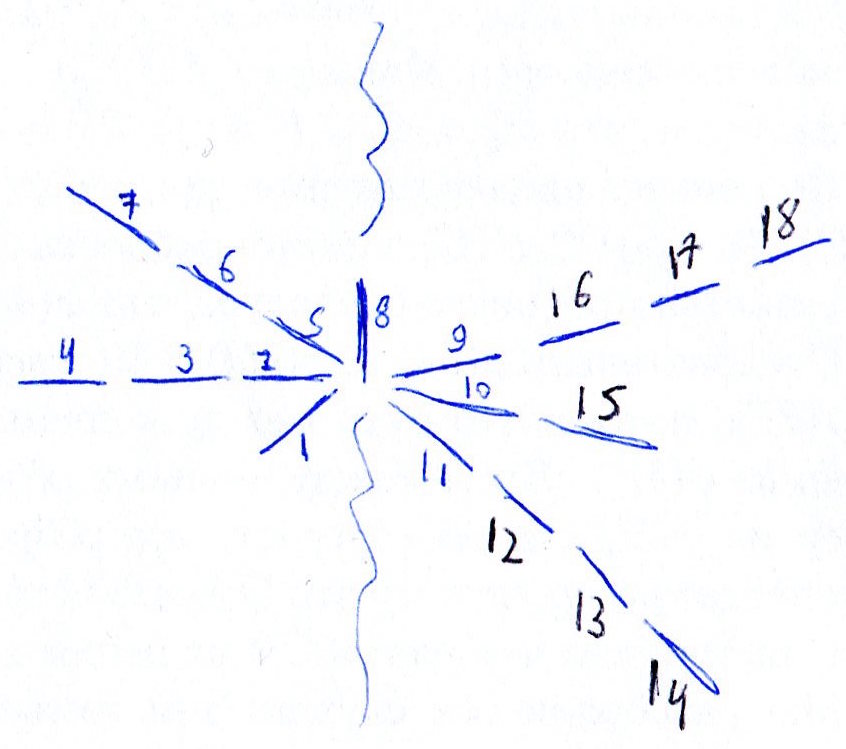}}
	\end{minipage}
}
\caption{\label{fig:enum-draw}Пример построения нумерации звена.}
\end{figure}

Обозначим через $t_1$ количество нечётных веток в узле, через $t_2$ --- количество чётных. Пронумеруем нечётные ветки числами $1$, $2$, $\dots$, $t_1$, чётные --- числами $1$, $2$, $\dots$, $t_2$.

Если в звене есть нечётные ветки, то сначала присваиваем номера их рёбрам. По очереди проходим по всем нечётным веткам и в каждой присваиваем номера от ствола к висячему ребру подряд: сначала рёбрам с первого по последнее первой нечётной ветки, потом ребрам с первого по последнее второй нечётной ветки и так далее.

Следующий номер присваиваем ребру звена, лежащему на стволе.

Если в звене есть чётные ветки, то переходим к ним. Нумерация рёбер чётных веток двухэтапна. Сначала по очереди проходим по всем чётным веткам $1$, $2$, $\dots$, $t_2$ и в каждой присваиваем очередной номер ребру, ближайшему к стволу. Потом проходим по чётным веткам в обратном порядке и в каждой ветке нумеруем оставшиеся рёбра от ствола к висячему ребру. Сначала рёбра со второго по последнее $t_2$-й чётной ветки. Потом рёбра со второго по последнее предпоследней чётной ветки и так далее. Последними в звене нумеруем рёбра со второго по последнее первой чётной ветки.

\smallskip

Далее в доказательстве теоремы \ref{low_branchiness}, там, где это уместно, мы будем использовать запись «ребро $k$» как сокращение для слов «ребро, получившее номер $k$ в построенной нами нумерации». Для доказательства теоремы \ref{low_branchiness} нам понадобится следующее определение.

\begin{definition}[Самостоятельность]
Будем говорить, что номер $k$ {\it самостоятелен}, если в построенной нами нумерации для произвольного целого $s$ путь между $k$ и $k + 1$ в дереве $G$ содержит либо оба ребра $k + 2s, k + 2s + 1$, либо ни одного из этих рёбер.
\end{definition}

Очевидно, что дружественность дерева $G$ простому пути эквивалентна самостоятельности всех номеров. А самостоятельность номера $k$ эквивалентна тому, что рёбра на пути между $k$ и $k + 1$ можно разбить на пары, такие что каждая состоит из рёбер $k + 2s$, $k + 2s + 1$ для некоторого целого $s$.

Для номеров $k$, таких, что рёбра $k$ и $k + 1$ имеют общую вершину, самостоятельность очевидна. 

Пусть теперь рёбра $k$ и $k + 1$ не имеют общей вершины. По построению такое возможно только когда $k$ --- висячее ребро ветки. Обозначим через $i$ номер этой ветки, через $L$ --- её длину, через $m$ --- номер узла, которому она принадлежит.

\begin{figure}
\center{
\begin{minipage}{0.8\linewidth}
\begin{minipage}{0.45\linewidth}
\center{\includegraphics[height=5cm]{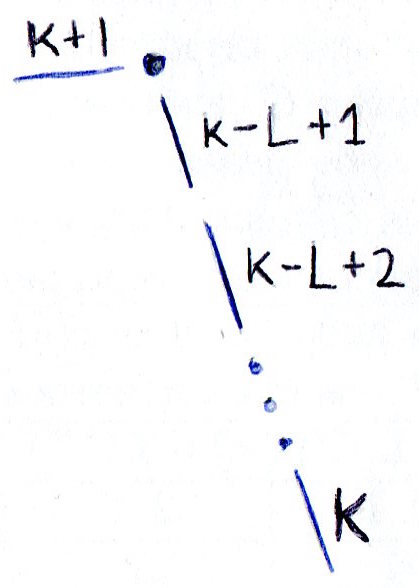}}
\caption{\label{fig:05odd}$k$ --- висячее ребро нечётной ветки.}
\end{minipage}
\hfill
\begin{minipage}{0.45\linewidth}
\center{\includegraphics[height=5cm]{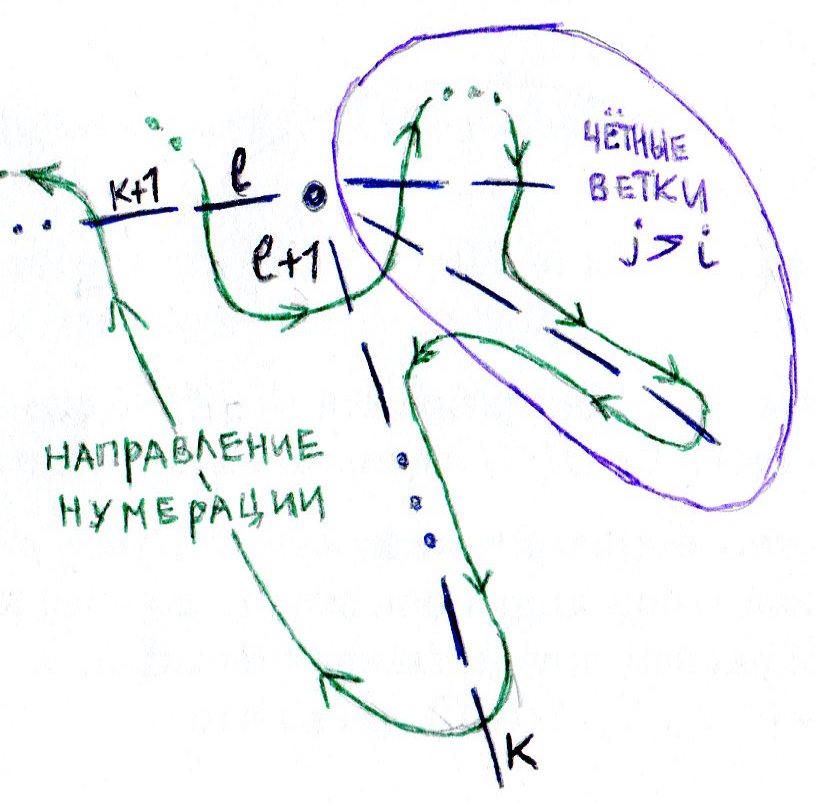}}
\caption{\label{fig:05even}$k$ --- висячее ребро чётной ветки.}
\end{minipage}
\end{minipage}
}
\end{figure}

{\it Случай: $L$ нечётно,} см. рис. \ref{fig:05odd}. По построению $k + 1$ --- либо первое ребро другой нечётной ветки $m$-го звена, либо ребро ствола $m$-го звена. В обоих случаях ребро $k + 1$ смежно с $m$-й вершиной ствола. Поэтому между рёбрами $k$ и $k + 1$ лежат рёбра $i$-й нечётной ветки и только они. Мы нумеровали рёбра внутри нечётных веток подряд, поэтому рёбра $i$-й нечётной ветки --- это $k - L + 1$, $k - L + 2$, $\ldots$, $k - 1$, $k$. Рассматриваемая нами ветка нечётная, поэтому между рёбрами $k$ и $k + 1$ лежит чётное количество рёбер. Кроме этого, номера $k - L + 1$ и $k$ имеют одинаковую чётность. Разобьём рёбра между рёбрами $k$ и $k + 1$ на пары так:

$$(k - L + 1, k - L + 2), \ldots, (k - 2, k - 1).$$

Каждая из этих пар состоит из рёбер $k + 2s$, $k + 2s + 1$ для некоторого целого $s$. А значит, номер $k$ самостоятелен.

\smallskip

{\it Случай: $L$ --- чётно,} см. рис. \ref{fig:05even}. Если $i = 1$, то $k + 1$ --- это ребро, не принадлежащее $m$-му звену и смежное с вершиной ствола, имеющей номер $m + 1$. В остальных случаях $k + 1$ --- это ближайшее к стволу ребро чётной ветки c номером $i - 1$. Таким образом, между ребром $k + 1$ и $m$-й вершиной ствола лежит единственное ребро. Обозначим его через $l$. Получается, что между рёбрами $k$ и $k + 1$ лежат рёбра $i$-й чётной ветки и ребро $l$. По построению, ближайшее к стволу ребро $i$-й чётной ветки имеет номер $l + 1$, а остальные рёбра этой ветки --- это $k - L + 2$, $k - L + 3$, $\ldots$, $k - 1$, $k$. Разобьём рёбра между рёбрами $k$ и $k + 1$ на пары так:

$$(l, l + 1),
	(k - L + 2, k - L + 3)
	, \ldots, 
	(k - 2, k - 1).$$

То, что каждая из остальных пар $(k - L + 2, k - L + 3), \ldots, (k - 2, k - 1)$ состоит из рёбер $k + 2s$, $k + 2s + 1$ для некоторого целого $s$, следует из чётности $L$.

Докажем, что $k - l$ чётно. Число $k - l$ равно количеству рёбер $l + 1$, $l + 2$, $\ldots$, $k - 1$, $k$. Оно чётно, так как по построению рёбра $l + 1$, $l + 2$, $\ldots$, $k - 1$, $k$ --- это все рёбра $i$-й чётной ветки, а также все рёбра чётных веток, получивших номера большие, чем $i$.

Таким образом, номер $k$ самостоятелен и в этом случае.

\qed

        \subsection{Доказательство теоремы \ref{parity_and_center}}
            \graphicspath{{06/}}
Отметим, что Аввакумовым предложено более простое, не использующее индукцию доказательство теоремы \ref{parity_and_center}. Однако в работе предлагается индуктивное доказательство, поскольку кажется, что его проще обобщить для доказательства гипотез \ref{g::d4} и \ref{g::odd}.

Для доказательства теоремы \ref{parity_and_center} мы докажем более сильное утверждение \ref{forced_parity_and_center}. Для того, чтобы его сформулировать, введём следующие обозначения.

\begin{notation}[Висячие рёбра]
    Для дерева $G$ будем обозначать через $H(G)$ множество его висячих рёбер.
\end{notation}

\begin{notation}[Знак сравнения]
  Пусть $p, q \subset \mathbb Z$. Будем говорить, что $p < q$, если $\forall e \in p, \forall w \in q$ выполнено $e < w$, см. рис. \ref{fig:less}.

  Пусть $e \in \mathbb Z, q \subset \mathbb Z$. Будем говорить, что $e < q$, если $\forall w \in q$ выполнено $e < w$.
\end{notation}

\begin{figure}[h]
  \center{\includegraphics[height=1.5cm]{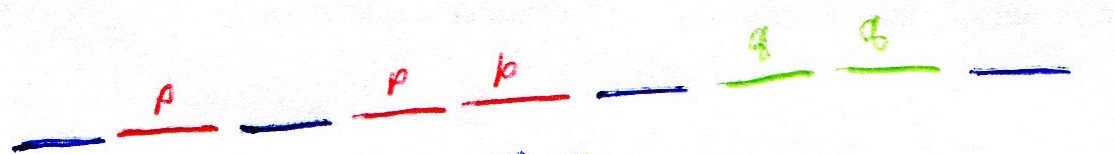}}
  \caption{\label{fig:less}$p < q$}
\end{figure}

\begin{definition}
    Будем говорить, что биекция $\varphi: E(G) \to \{1, 2, \ldots |E(G)|\}$ из множества рёбер дерева $G$ во множество рёбер простого пути является {\it листокрайной}, если выполнено свойство
        $$\varphi(E(G)\setminus H(G)) < \varphi(H(G)).$$
\end{definition}

\begin{statement}
  \label{forced_parity_and_center}
  Пусть $G$ --- дерево, удовлетворяющее условию теоремы \ref{parity_and_center}. Тогда существует листокрайная дружественная биекция
  $\varphi: E(G) \to \{1, 2, \ldots, |E(G)|\}$ .
\end{statement}

Ввиду симметричности дружественности (см. утв. \ref{symm}) из утверждения \ref{forced_parity_and_center} следует теорема \ref{parity_and_center}. Более того, биекция, обратная к построенной в доказательстве утверждения \ref{forced_parity_and_center}, задаёт дружественную нумерацию рёбер дерева $G$. Для доказательства утверждения \ref{forced_parity_and_center} введём обозначения.

\begin{notation}[Висячие вершины]
  Для дерева $G$ обозначим через $L(G)$ множество его висячих вершин.
\end{notation}

\begin{notation}[$G'$]
    Для дерева $G$ обозначим через $G'$ дерево, у которого $V = {V(G) \setminus L(G)}$, $E = {E(G) \setminus H(G)}$.
\end{notation}

\begin{notation}[$f_{pa}(e)$]
    В дереве $G$, если $H(G') \not = \varnothing$, то для ребра $e \in H(G)$ будем обозначать смежное с ним ребро из $H(G')$ через $f_{pa}(e)$.
\end{notation}

\begin{notation}[$f_{pa}^{-1}(e)$]
    В дереве $G$, для ребра $e \in H(G')$ будем обозначать через $f^{-1}_{pa}(e)$ можество рёбер из $H(G)$, которые имеют общую вершину с $e$.
\end{notation}

Обозначения «$G'$», «$f_{pa}(e)$» проиллюстрированы рисунком \ref{fig:fpa}.

\begin{figure}[h]
  \begin{minipage}{0.49\linewidth}
    \center{\includegraphics[height=3cm]{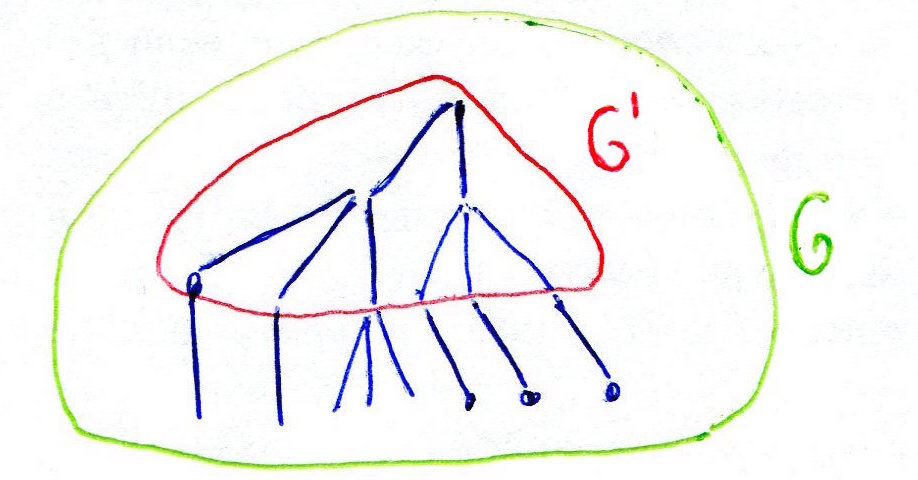}}
  \end{minipage}
  \hfill
  \begin{minipage}{0.49\linewidth}
    \center{\includegraphics[height=3cm]{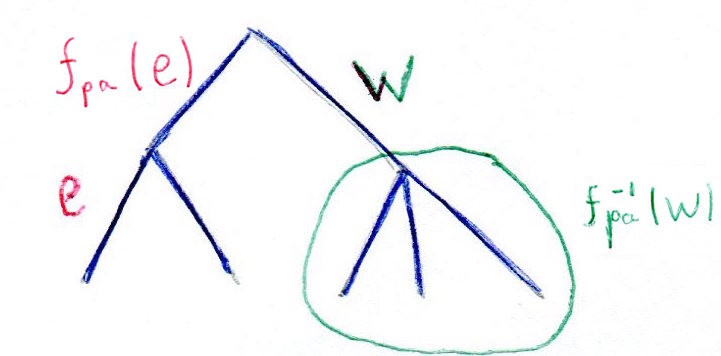}}
  \end{minipage}
  \caption{\label{fig:fpa}Иллюстрации к обозначениям «$G'$», «$f_{pa}(e)$».}
\end{figure}

\begin{proof}[Начало доказательства утверждения \ref{forced_parity_and_center}]
Обозначим через $C$ вершину дерева $G$, расстояние от которой до всех висячих вершин одинаково. Обозначим через $\rho$ расстояние от $C$ до висячих вершин дерева $G$. Будем доказывать утверждение \ref{forced_parity_and_center} индукцией по $\rho$.

База индукции, когда $\rho = 0$, тривиальна, так как в этом случае дерево $G$ состоит из одной вершины.

Пусть $\rho > 0$. Предположим по индукции, что для любого дерева $G_1$, в котором все висячие вершины находятся на расстоянии $\rho_1 < \rho$ от некоторой вершины, а все не висячие вершины имеют чётную степень, существует листокрайная дружественная биекция в простой путь. Заметим, что к дереву $G'$ применимо предположение индукции. Значит, существует листокрайная дружественная биекция $\varphi': G' \to \{1, 2, \ldots, |E(G')|\}$. Очевидно, существует её продолжение
$\varphi: G \to \{1, 2, \ldots, |E(G)|\}$,
обладающее свойством {\it противохода}:
\begin{multline*}
\text{Для любой пары рёбер } e_1, e_2 \in H(G) \\
\text{ верно } \varphi(f_{pa}(e_1)) < \varphi(f_{pa}(e_2)) \Rightarrow \varphi(e_1) > \varphi(e_2)
\end{multline*}

Пример такого продолжения изображён на рисунке \ref{fig:continue}.

\begin{figure}[h]
  \begin{minipage}{0.49\linewidth}
  \center{\includegraphics[height=3cm]{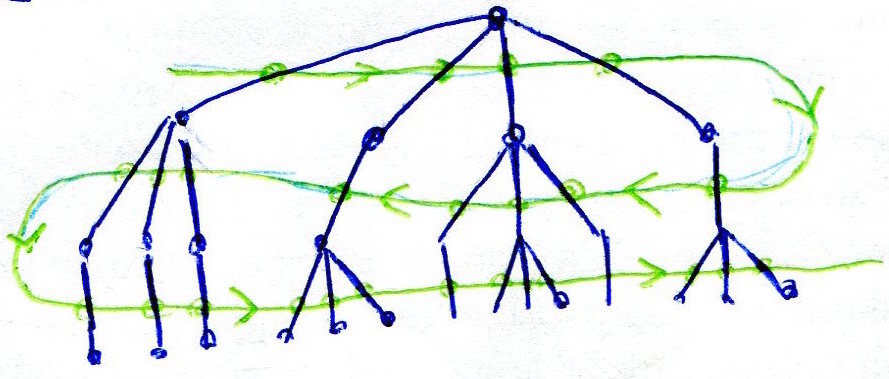}}
  \end{minipage}
  \hfill
  \begin{minipage}{0.45\linewidth}
  \caption{\label{fig:continue}Продолжение листокрайной биекции, обладающее свойством противохода.}
  \end{minipage}
\end{figure}

Очевидно, построенная биекция $\varphi$ является листокрайной. Для доказательства шага индукции осталось доказать её дружественность. Нам понадобится следующая лемма.

\begin{lemma}[О нечётности]
  \label{odd_lemma}
  Пусть $G$ --- дерево, удовлетворяющее условию теоремы \ref{parity_and_center}. Пусть $C, Q, P$ --- вершины этого дерева, такие, что $C$ находится ото всех висячих вершин на одинаковом расстоянии $\rho$, вершина $Q$ --- висячая, вершина $P$ --- смежная с висячей, возможно, другой. Тогда расстояние от $P$ до $Q$ нечётно.
\end{lemma}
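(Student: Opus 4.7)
План состоит в том, чтобы вычислить чётности расстояний $\mathrm{dist}(C,P)$ и $\mathrm{dist}(C,Q)$ по отдельности, пользуясь равноудалённостью висячих вершин от $C$, а затем сложить их по стандартной формуле для расстояния в дереве через точку расхождения двух путей, выходящих из $C$.

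Первый шаг очевиден: поскольку $Q$ --- висячая, $\mathrm{dist}(C,Q)=\rho$. Для вершины $P$ я бы выбрал произвольную висячую вершину $Q'$, смежную с $P$ (она существует по условию; возможно, $Q'=Q$). Единственный путь из $C$ в $Q'$ имеет длину $\rho$ и заканчивается ребром $PQ'$, поскольку $Q'$ имеет степень $1$ и её единственный сосед --- это $P$. Поэтому $\mathrm{dist}(C,P)=\rho-1$.

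На втором шаге я бы укоренил дерево в $C$ и ввёл вершину $M$ --- наиболее удалённую от $C$ общую вершину путей $C\to P$ и $C\to Q$. Единственный путь из $P$ в $Q$ проходит через $M$, и по стандартной формуле
\[
\mathrm{dist}(P,Q)=\mathrm{dist}(C,P)+\mathrm{dist}(C,Q)-2\,\mathrm{dist}(C,M)=2\rho-1-2\,\mathrm{dist}(C,M),
\]
что нечётно, как и требовалось.

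Существенных препятствий здесь нет: лемма сводится к одному подсчёту чётности. Любопытно, что условие чётности степеней всех не висячих вершин в этом рассуждении не используется --- достаточно лишь равноудалённости висячих вершин от $C$. По-видимому, чётность степеней понадобится позже, в самом индуктивном шаге утверждения \ref{forced_parity_and_center}, при проверке несцепленности кограниц вершин, соединённых путём чётной длины; лемма же о нечётности нужна лишь как вспомогательное комбинаторное наблюдение о структуре путей в дереве.
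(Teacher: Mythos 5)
Ваше рассуждение верно и по существу совпадает с доказательством в работе: там тоже берутся расстояния $\rho$ и $\rho-1$ от $C$ до $Q$ и до $P$ и применяется та же формула $(\rho-r)+(\rho-1-r)=2(\rho-r)-1$, где $r$ --- число общих рёбер двух путей, то есть в точности ваше $\mathrm{dist}(C,M)$. Вы лишь чуть аккуратнее обосновываете равенство $\mathrm{dist}(C,P)=\rho-1$, которое в работе просто констатируется; замечание о том, что чётность степеней здесь не используется, также верно.
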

\begin{figure}[h]
  \center{\includegraphics[height=5cm]{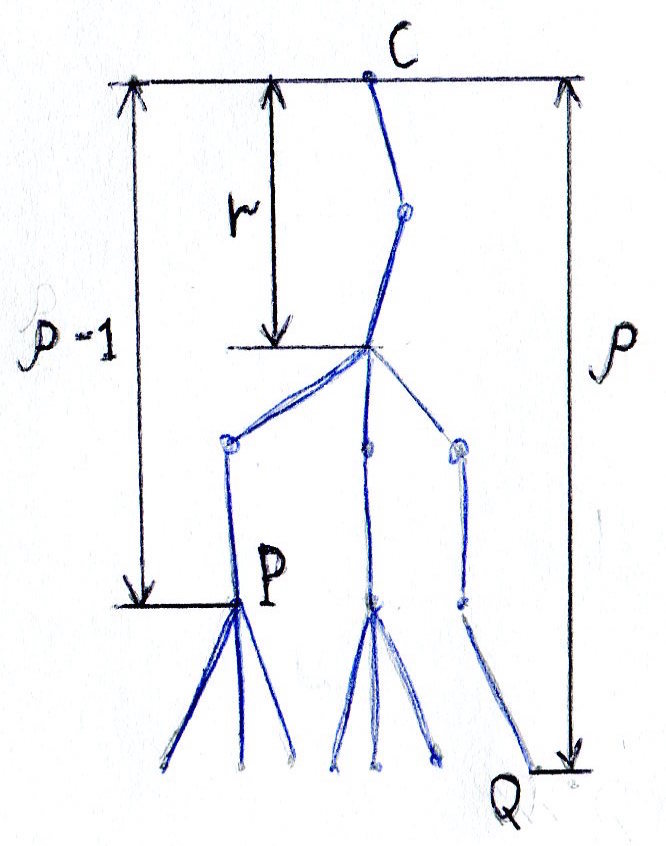}}
  \caption{\label{fig:odd_lemma}Лемма о нечётности.}
\end{figure}
\begin{proof}
  См. рис. \ref{fig:odd_lemma}. Расстояние от вершины $Q$ до $C$ равно $\rho$. Расстояние от вершины $P$ до $C$ равно $\rho - 1$. Пусть $r$ --- количество общих рёбер путей от $Q$ до $C$ и от $P$ до $C$. Тогда расстояние между $Q$ и $P$ равно $(\rho - r) + (\rho - 1 - r) = 2(\rho - r) - 1$, а значит нечётно.
\end{proof}

{\it Продолжение доказательства утверждения \ref{forced_parity_and_center}}. Имеем $V(G) = L(G) \sqcup L(G') \sqcup L(G'') \sqcup V(G''')$ (для маленьких $\rho$ некоторые из этих множеств пусты). В таблице на рис. \ref{case-table} на пересечении строки и столбца указан случай, в котором доказано, что образы кограниц вершин, лежащих в соответствующих множествах и находящихся друг от друга на ненулевом чётном расстоянии, не зацеплены.

\begin{figure}[h]
\caption{\label{case-table}Таблица покрытия случаями.}
\center{\begin{tabular}{|c|c|c|c|c|}
\hline
          & $L(G)$ & $L(G')$ & $L(G'')$ & $V(G''')$ \\
\hline
   $L(G)$ &      2 &       1 &        5 &         4 \\
\hline
  $L(G')$ &      1 &       6 &        1 &         4 \\
\hline
 $L(G'')$ &      5 &       1 &        3 &         3 \\
\hline
$V(G''')$ &      4 &       4 &        3 &         3 \\
\hline
\end{tabular}}
\end{figure}

{\it Случай 1: $P \in L(G'), Q \in L(G) \cup L(G'')$.}  Вершины из $L(G'')$ лежат на нечётном расстоянии от $P$ согласно лемме \ref{odd_lemma} о нечётности, применённой к дереву $G'$. Вершины из $L(G)$ лежат на нечётном расстоянии от $P$ согласно лемме \ref{odd_lemma} о нечётности, применённой к дереву $G$. Значит, в этом случае вершины $P$ и $Q$ не могут лежать друг от друга на чётном расстоянии.

{\it Случай 2: $P, Q \in L(G)$.} Образы $\varphi(\delta P)$ и $\varphi(\delta Q)$ кограниц вершин $P$ и $Q$, лежащих друг от друга на чётном расстоянии, не зацеплены, так как каждый содержит всего одно ребро.

{\it Случай 3: $P, Q \in V(G''') \cup L(G'')$.} Раз $P, Q \in V(G''') \cup L(G'')$, то $\delta Q \subset E(G')$ и $\delta P \subset E(G')$. Тогда из того, что $\varphi$ --- продолжение дружественной биекции $\varphi': E(G') \to \{ 1, 2, \ldots |E(G')| \}$, следует, что образы $\varphi(\delta P)$ и $\varphi(\delta Q)$ не зацеплены.

{\it Случай 4: $P \in V(G''')$, $Q \in L(G') \cup L(G)$.} Так как $Q \in L(G') \cup L(G)$, то $\delta Q \subset H(G') \cup H(G)$. Заметим, что $\delta P \subset E(G) \setminus (H(G') \cup H(G))$, а значит $\varphi(\delta P) < \varphi(\delta Q)$, из-за листокрайности биекции $\varphi$ и листокрайности биекции $\varphi'$. Следовательно образы $\varphi(\delta P)$ и $\varphi(\delta Q)$ не зацеплены.

{\it Случай 5: $P \in L(G'')$, $Q \in L(G)$.} Так как $Q \in L(G)$, то $\delta Q \subset H(G)$. Заметим, что $\delta(P) \subset E(G) \setminus H(G)$, а значит $\varphi(\delta P) < \varphi(\delta Q)$ из-за листокрайности биекции  $\varphi$ и листокрайности биекции $\varphi'$. Следовательно образы $\varphi(\delta P)$ и $\varphi(\delta Q)$ не зацеплены.

{\it Случай 6: $P, Q \in L(G')$,} см. рис. \ref{fig:case6}. Докажем, для всех пар различных $P, Q \in L(G')$, что $\varphi(\delta P)$ и $\varphi(\delta Q)$ не зацеплены.
  
Если  $H(G')$ пусто, то $\rho = 1$ и $L(G')$ состоит из одной вершины, а значит выбрать пару различных вершин из $L(G')$ невозможно. Пусть теперь $H(G')$ не пусто. Пусть ребро $e_P \in H(G')$ смежно с $P$, ребро $e_Q \in H(G')$ смежно с $Q$. Из того, что $P$ и $Q$ различны, следует, что $e_P$ и $e_Q$ --- это разные рёбра. Предположим без потери общности, что $\varphi(e_P) < \varphi(e_Q)$.

\begin{figure}[h]
  \center{\includegraphics[height=5cm]{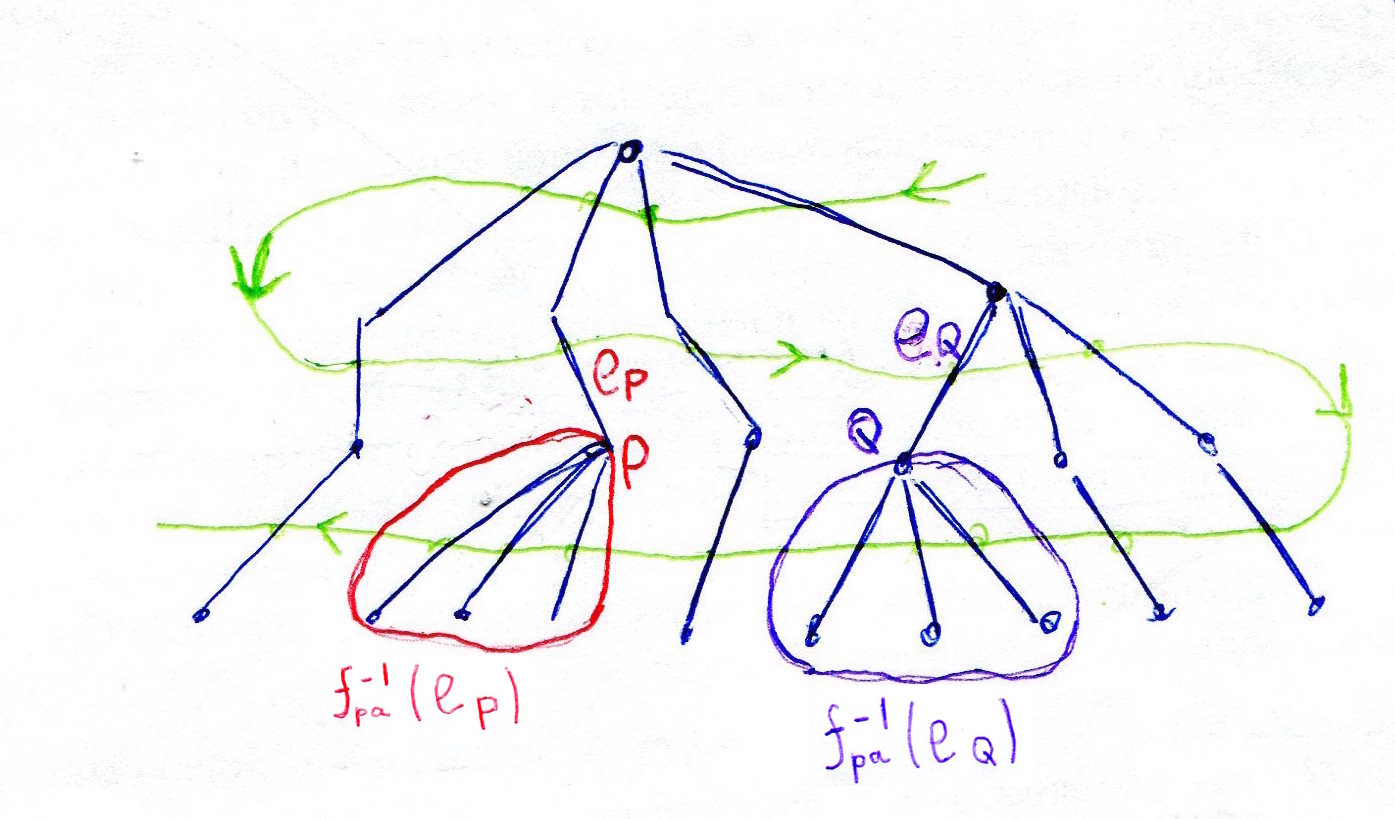}}
  \caption{\label{fig:case6}Случай 6.}
\end{figure}

Ясно, что $\delta Q = \{e_Q\} \cup f_{pa}^{-1}(e_Q)$, $\delta P = \{e_P\} \cup f_{pa}^{-1}(e_P)$.

По нашему предположению $\varphi(e_P) < \varphi(e_Q)$.
Из того, что $e_Q \in E(G)\setminus H(G)$, $f_{pa}^{-1}(e_Q) \subset H(G)$ и листокрайности биекции $\varphi$ следует что $\varphi(e_Q) < \varphi(f_{pa}^{-1}(e_Q))$.
Из свойства противохода следует, что
$\varphi(f_{pa}^{-1}(e_Q)) < \varphi(f_{pa}^{-1}(e_P))$.

Следовательно $\varphi(e_P) < \varphi(\delta Q) < \varphi(f_{pa}^{-1}(e_P))$. Пути между концами рёбер из $\varphi(\delta Q)$ не содержат рёбер из $\varphi(\delta P)$, поэтому  $\varphi(\delta Q)$ не цепляется за $\varphi(\delta P)$. Пути между концами рёбер из $\varphi(\delta P)$ либо не содержат рёбер из $\varphi(\delta Q)$, либо содержат все рёбра из $\varphi(\delta Q)$. Вершина $Q$ имеет чётную степень по условию теоремы, следовательно в $\varphi(\delta Q)$ чётное количество рёбер. Получается, что и образ $\varphi(\delta P)$ не цепляется за образ $\varphi(\delta Q)$.        

\smallskip

Мы разобрали все случаи, тем самым доказали дружественность построенного продолжения, а значит шаг индукции и утверждение~\ref{forced_parity_and_center}.
\end{proof}
        \subsection{Доказательство теоремы \ref{3_criteria}}
            \graphicspath{{07/}}
\label{sec:7}
Обозначим через $C_1$ и $C_2$ вершины $CB(n_1, n_2)$, из которых исходит $n_1$ и $n_2$ рёбер, соответственно, см. рис. \ref{fig:C1C2}.

\begin{figure}[h]
\begin{minipage}{0.49\linewidth}
\center{\includegraphics[height=3cm]{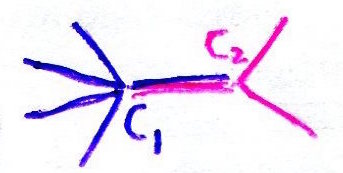}}
\caption{\label{fig:C1C2}$CB(5,3)$.}
\end{minipage}
\hfill
\begin{minipage}{0.49\linewidth}
\center{\includegraphics[height=4cm]{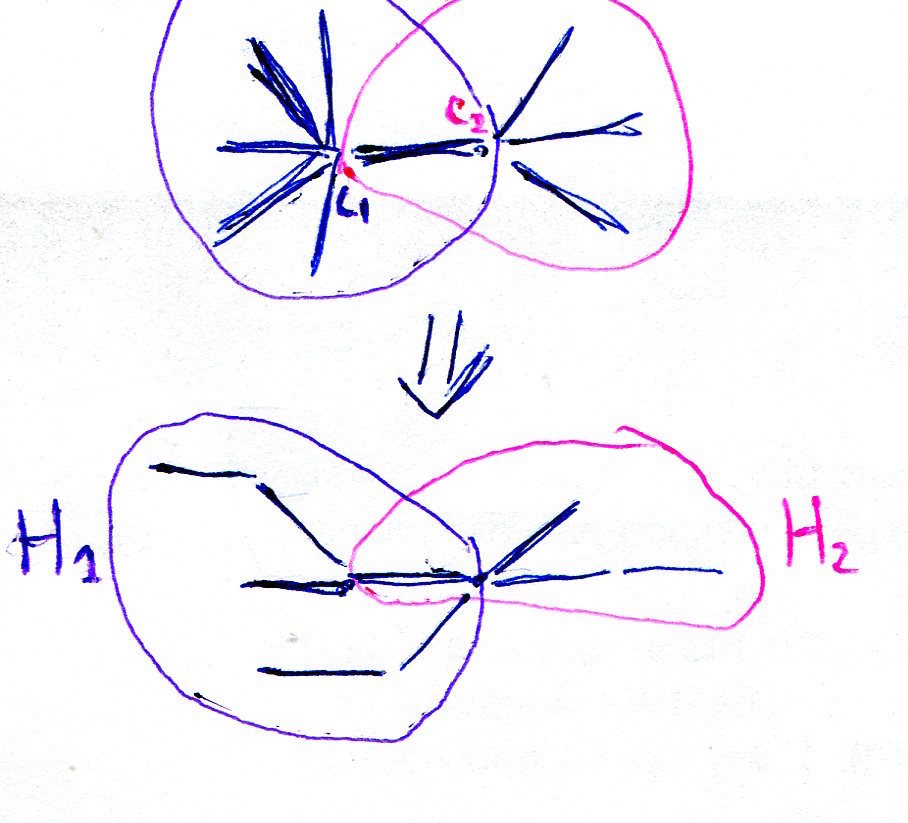}}
\caption{\label{fig:CB-bijection}Пример дружественной биекции.}
\end{minipage}
\end{figure}

\begin{proof}[Доказательство достаточности в теореме \ref{3_criteria}]
Пусть $H_1$ и $H_2$ --- поддеревья из условия теоремы. Пусть $e$ --- ребро, по которому они пересекаются. Построим дружественную биекцию $\varphi: E(CB(n_1, n_2)) \to E(G)$ так:
\begin{itemize}
\item $\varphi(C_1C_2) := e$,
\item остальные рёбра кограницы $\delta C_1$ произвольно переходят в остальные рёбра, принадлежащие $E(H_1)$,
\item остальные рёбра кограницы $\delta C_2$ произвольно переходят в остальные рёбра, принадлежащие $E(H_2)$.
\end{itemize}

\noindent
Пример построенной таким образом биекции изображён на рисунке \ref{fig:CB-bijection}.
Докажем, что $\varphi$ дружественная. Ясно, что образ кограницы каждой вершины дерева $CB(n_1, n_2)$ связен. Образы $\varphi(\delta C_1)$ и $\varphi(\delta C_2)$ связны по построению. Образ кограницы любой другой вершины связен, так как состоит из одного ребра, ибо все вершины $CB(n_1, n_2)$, кроме $C_1$ и $C_2$ --- висячие. Это значит, что пути между концами рёбер одной кограницы вообще не содержат рёбер других кограниц. Поэтому биекция $\varphi$ дружественная.
\end{proof}

Для доказательства необходимости в теореме \ref{3_criteria} нам понадобится утверждение \ref{st_connected}.

\begin{statement}
\label{st_connected}
Пусть $\varphi$ --- дружественная биекция между $CB(n_1, n_2)$ и неким деревом. Тогда $\varphi(\delta C_1)$ и $\varphi(\delta C_2)$ --- связные\footnote{Под связным множеством рёбер дерева мы подразумеваем множество рёбер дерева такое, что на любом пути между рёбрами этого множества лежат только рёбра этого множества.} множества.
\end{statement}

\begin{proof} Докажем от противного. Пусть $\varphi(\delta C_1)$ не является связным множеством. Тогда существует путь между концами рёбер, принадлежащих $\varphi(\delta C_1)$, содержащий хотя бы одно ребро $e$ не из $\varphi(\delta C_1)$. Рассмотрим $\varphi^{-1}(e)$.

Ясно, что $E(CB(n_1, n_2))$ состоит из:
\begin{itemize}
\item ребра $C_1C_2$,
\item висячих рёбер, выходящих из $C_1$,
\item висячих рёбер, выходящих из $C_2$.
\end{itemize}
Раз $\varphi^{-1}(e) \notin \delta C_1$, то $\varphi^{-1}(e)$ --- висячее ребро, смежное с $C_2$. Его второй конец --- это некая висячая вершина $Q$, находящаяся на чётном расстоянии от $C_1$. Множество, состоящее из $\varphi^{-1}(e)$, является кограницей вершины~$Q$.

Таким образом, путь между концами рёбер образа $\varphi(\delta C_1)$, содержащий ребро $e$,  содержит нечётное количество рёбер из $\varphi(\delta Q)$. Значит, образ $\varphi(\delta C_1)$ зацеплен с образом $\varphi(\delta Q)$. Противоречие.

Аналогично доказывается, что $\varphi(\delta C_2)$ --- связное множество.

\end{proof}

\begin{proof}[Доказательство необходимости в теореме \ref{3_criteria}]
Пусть существует дружественная биекция $\varphi: E(CB(n_1, n_2)) \to E(G)$. Тогда $\varphi(\delta C_1)$ и $\varphi(\delta C_2)$ связны согласно утверждению \ref{st_connected} и $\varphi(\delta C_1) \cap \varphi(\delta C_2) = \varphi(C_1C_2)$. Поддеревья $H_1$ и $H_2$, чьи множества рёбер --- это $\varphi(\delta C_1)$ и $\varphi(\delta C_2)$, соответственно, являются искомыми поддеревьями.
\end{proof}
        \subsection{Доказательство утверждения \ref{CB_n_4}}
            \graphicspath{{10/}}

Для доказательства утверждения \ref{CB_n_4} докажем следующее.

\begin{lemma}
\label{CB_n_4_lemma}
Пусть $n$ равно $2$, $3$, или $4$. Пусть в дереве $G$ количество рёбер не меньше $n$. Тогда в $E(G)$ можно выбрать два подмножества $E_1$ и $E_2$, которые пересекаются ровно по одному ребру, такие, что соответствующие им поддеревья связны, $|E_1| = |E(G)| - n + 1$, $|E_2| = n$.
\end{lemma}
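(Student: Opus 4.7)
The plan is to reformulate the lemma in terms of ``pendant subtrees'' of $G$ and then verify the reformulated claim by case analysis on $n$.

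Since $|E_1| + |E_2| = |E(G)| + 1$, we have $E_1 \cup E_2 = E(G)$; write $R := E_2 \setminus E_1$ (so $|R| = n-1$) and let $e^*$ be the unique edge of $E_1 \cap E_2$. Then $E_1 = E(G) \setminus R$ and $E_2 = R \cup \{e^*\}$. A standard tree-theoretic observation gives: $E_1$ forms a subtree of $G$ iff the set $U$ of vertices of $G$ incident to no edge of $E_1$ has size $n-1$ and each connected component of the induced subgraph $G[U]$ is a pendant subtree of $G$ (attached to $V(G) \setminus U$ by exactly one edge), in which case $R$ equals the set of all $G$-edges incident to $U$. Adding $e^*$, the set $E_2$ forms a subtree iff the attachment vertices of the pendant components all coincide with endpoints of $e^*$, giving two subcases: either all pendants attach to a single vertex $v^*$ (and $e^*$ is any other edge at $v^*$), or they attach to two adjacent vertices $v_1^*, v_2^*$ (and $e^* = v_1^* v_2^*$).

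For $n = 2$, any leaf $l$ of $G$ gives $U = \{l\}$; its neighbor has degree $\geq 2$ (since $|E(G)| \geq 2$), supplying $e^*$. For $n = 3$, I would let $l$ be an endpoint of a longest path of $G$ and $v_1$ its neighbor. If $\deg v_1 = 2$, set $U = \{l, v_1\}$ attached at $v_1$'s other neighbor (which has degree $\geq 2$, otherwise $G$ would be a $2$-edge path) and take $e^*$ any other edge there. Otherwise $\deg v_1 \geq 3$, and by maximality of the longest path every other neighbor of $v_1$ is a leaf, so $v_1$ has another leaf neighbor $l'$; set $U = \{l, l'\}$ and let $e^*$ be a third edge at $v_1$.

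For $n = 4$ (the harder case), $|U| = 3$ allows three possible $U$-shapes: a length-$3$ path pendant, a length-$2$ pendant together with a single-leaf pendant (co-attached or attached to adjacent vertices), or three single-leaf pendants (in one of the two admissible attachment patterns). I would branch on the degrees of the first two interior vertices $v_1, v_2$ of a longest path starting at a leaf $l$ and on the structure of their off-path neighbors, using longest-path maximality to bound the depth of side branches. The main obstacle will be exhausting the configurations in trees of small diameter (diameter $\leq 3$, e.g.\ stars or caterpillars), where longest-path maximality is weakest; in that regime one exploits the existence of a vertex of high degree (forced by $|E(G)| \geq 4$ together with the small-diameter hypothesis) to directly place three single-leaf pendants at a common attachment vertex $v^*$ with a fourth edge $e^*$ incident to $v^*$.
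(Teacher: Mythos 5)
Your reformulation via pendant components is correct and is a reasonable equivalent restatement of the lemma: $E_1=E(G)\setminus R$ is a subtree iff the uncovered vertex set $U$ has size $n-1$ and each component of $G[U]$ hangs on $V(G)\setminus U$ by a single edge, and $E_2$ is then a subtree iff the attachment vertices are all one vertex $v^*$ with $e^*$ another edge at $v^*$, or two adjacent vertices joined by $e^*$. Your treatments of $n=2$ and $n=3$ are complete and essentially coincide with the paper's (the paper likewise takes a leaf $Q$ at maximal distance from a leaf $P$ and splits on $\deg Q'$). The genuine gap is the case $n=4$: you do not carry out the case analysis, you only announce that you ``would branch'' on the degrees of the first two interior vertices. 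That case is the bulk of the lemma --- the paper needs five configurations ($\deg Q'>3$; $\deg Q'=3$; $\deg Q'=2$ with $\deg Q''=2$; $\deg Q'=2$, $\deg Q''>2$ with the extra neighbour $A$ a leaf; and the same with $A$ not a leaf) --- so a plan without the verification does not close the proof.

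Moreover, the one concrete claim you do make about the hard regime is false as stated. For small-diameter trees you propose to ``directly place three single-leaf pendants at a common attachment vertex $v^*$ with a fourth edge $e^*$ incident to $v^*$''; this requires a vertex with at least three leaf neighbours and degree at least $4$. The double stars $CB(3,3)$ (five edges, each centre has only two leaf neighbours) and $CB(3,2)$ (four edges) have no such vertex, yet both satisfy the hypotheses for $n=4$. In these trees only the other admissible pattern works: pendant leaves attached to the two adjacent centres with $e^*=C_1C_2$. So the $n=4$ argument must be written out in full, and the small-diameter branch must use the two-attachment-vertex configuration rather than the single-vertex one.
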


Из леммы \ref{CB_n_4_lemma} и теоремы \ref{3_criteria} будет следовать, что деревья $CB(n, 2)$, $CB(n, 3)$ и $CB(n, 4)$ дружественны любому дереву. Для доказательства леммы \ref{CB_n_4_lemma} понадобится обозначение.

\begin{notation}[$\deg Q$]
Для вершины $Q$ обозначим через $\deg Q$ её степень.
\end{notation}

\begin{proof}[Доказательство леммы \ref{CB_n_4_lemma} для $n = 2$]
Обозначим через $e$ некоторое висячее ребро дерева $G$, а через $w$ --- ребро, имеющее с $e$ общую вершину. Возьмём 
$$E_1 := E(G)\setminus \{e\} \text{ и } E_2 := \{e, w\}.$$

Ясно, что поддеревья, соответствующие $E_1$ и $E_2$, связны.
\end{proof}

\begin{proof}[Доказательство леммы \ref{CB_n_4_lemma} для $n = 3$]
Обозначим через $P$ висячую вершину дерева $G$, а через $Q$ --- какую-нибудь из вершин, лежащих от $P$ на максимальном расстоянии. Очевидно, $Q$ --- висячая вершина. Обозначим через $Q'$ вершину, смежная с $Q$. Вершина $Q'$ не является висячей и не совпадает с $P$. Обозначим через $Q''$ вершину, смежную с $Q'$ и лежащую на пути из $Q'$ в $P$ (вершины $Q''$ и $P$ могут совпадать).

\begin{figure}
\begin{minipage}{0.49\linewidth}
\center{\includegraphics[height=5cm]{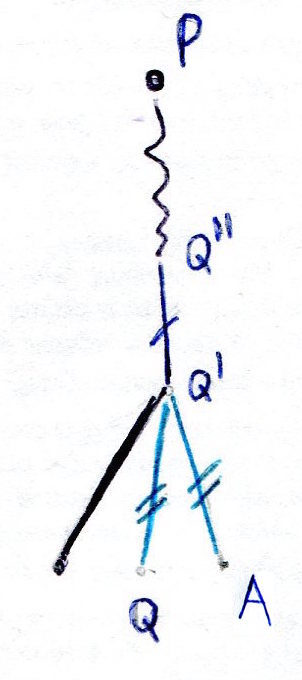}}
\caption{\label{fig:3-ge3}$n = 3, \deg Q' > 2$.}
\end{minipage}
\hfill
\begin{minipage}{0.49\linewidth}
\center{\includegraphics[height=5cm]{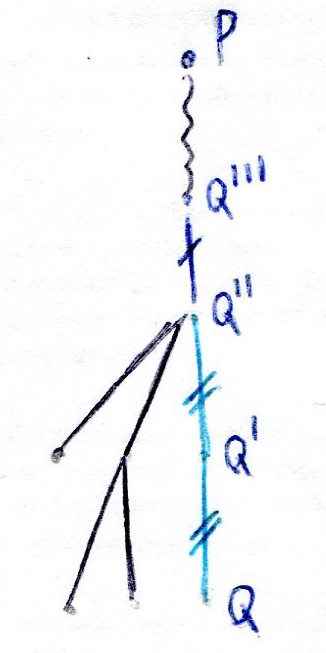}}
\caption{\label{fig:3-2}$n = 3, \deg Q' = 2$.}
\end{minipage}
\end{figure}

{\it Случай: $\deg Q' = 2$,} см. рис. \ref{fig:3-2}. В этом случае вершина $Q''$ не может быть висячей. Значит $Q''$ не совпадает с $P$. Обозначим через $Q'''$ вершину, смежную с $Q''$ и лежащую на пути из $Q''$ в $P$ (вершины $Q'''$ и $P$ могут совпадать). Возьмём 
$$E_1 := E(G) \setminus \{QQ', Q'Q''\}\text{ и }E_2 := \{QQ', Q'Q'', Q''Q'''\}.$$ 

Ясно, что поддеревья, соответствующие $E_1$ и $E_2$, связны.

{\it Случай: $\deg Q' > 2$,} см. рис. \ref{fig:3-ge3}. Обозначим через $A$ вершину, смежную с $Q'$, и не совпадающую с вершинами $Q$ и $Q''$. Возьмём 
$$E_1 := E(G) \setminus \{QQ', AQ'\}\text{ и }E_2 := \{QQ', AQ', Q'Q''\}.$$ 

Поддерево, соответствующее $E_2$, очевидно, связно. Чтобы доказать, что поддерево, соответствующее $E_1$, связно, достаточно убедиться в том, что вершина $A$ --- висячая. Действительно, если вершина $A$ не является висячей, тогда вершина, смежная с $A$ и не совпадающая с $Q'$, находится от вершины $P$ на большем расстоянии, чем вершина $Q$. Противоречие.
\end{proof}

\begin{proof}[Доказательство леммы \ref{CB_n_4_lemma} для $n = 4$]
Обозначим через $P$ висячую вершину дерева $G$, а через $Q$ --- какую-нибудь из вершин, лежащих от $P$ на максимальном расстоянии. Очевидно, $Q$ --- висячая вершина. Обозначим через $Q'$ вершину, смежная с $Q$. Вершина $Q'$ не является висячей и не совпадает с $P$. Обозначим через $Q''$ вершину, смежную с $Q'$ и лежащую на пути из $Q'$ в $P$ (вершины $Q''$ и $P$ могут совпадать). Рассуждением от противного, которое повторяет проведённое в разборе случая «$n = 3$, $\deg Q' > 2$», доказывается, что все вершины, кроме $Q''$, смежные с $Q'$, являются висячими. 

\begin{figure}
\begin{minipage}{0.49\linewidth}
\center{\includegraphics[height=5cm]{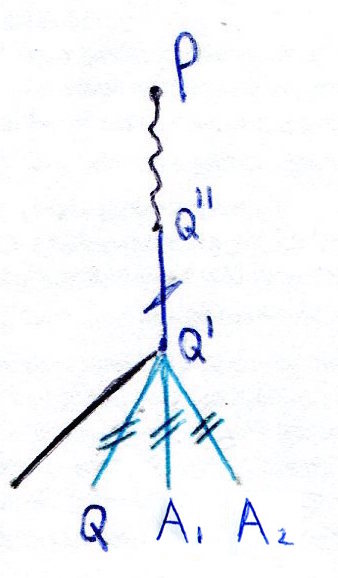}}
\caption{\label{fig:4-ge4}$n = 4, \deg Q' > 3$.}
\end{minipage}
\hfill
\begin{minipage}{0.49\linewidth}
\center{\includegraphics[height=5cm]{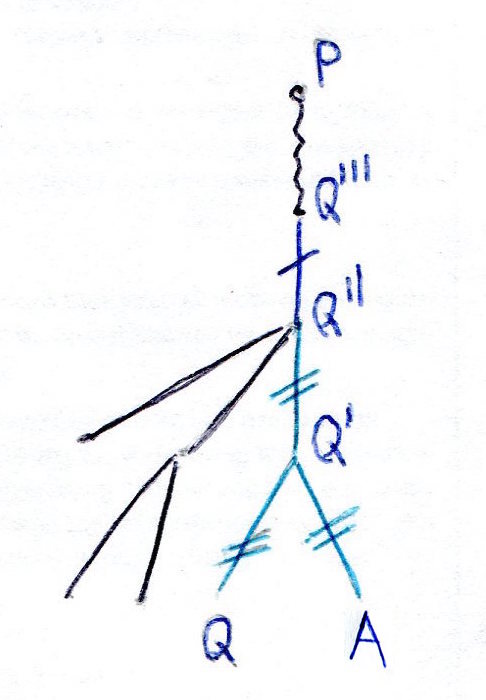}}
\caption{\label{fig:4-3}$n = 4, \deg Q' = 3$.}
\end{minipage}
\end{figure}

{\it Случай: $\deg Q' > 3$,} см. рис. \ref{fig:4-ge4}.
Обозначим через $A_1$, $A_2$ вершины, смежные с $Q'$ и не совпадающие с $Q$ и $Q''$.  Возьмём 
$$E_1 := E(G)\setminus \{QQ', A_1Q', A_2Q'\}\text{ и }E_2 := \{QQ', A_1Q', A_2Q', Q'Q''\}.$$ 

Поддерево,
соотвествующее $E_2$, очевидно, связно. Вершины $A_1$, $A_2$ являются висячими, значит, и поддерево, соответствующее $E_1$, связно.

{\it Случай: $\deg Q' \leqslant 3$}. Если $Q''$ --- висячая вершина и $\deg Q' \leqslant 3$, то в дереве $G$ не более трёх рёбер, что противоречит условию леммы. Значит, при $\deg Q' \leqslant 3$ вершина $Q''$ не является висячей. Следовательно вершина $Q''$ не совпадает с $P$. Обозначим через $Q'''$ вершину, смежную с $Q''$ и лежащую на пути из $Q''$ в $P$ (вершины $Q'''$ и $P$ могут совпадать).

{\it Подслучай: $\deg Q' = 3$,} см. рис. \ref{fig:4-3}.
Обозначим через $A$ вершину, смежную c $Q'$ и не совпадающую с вершинами $Q$ и $Q''$. Возьмём 
$$E_1 := E(G)\setminus \{QQ', AQ', Q'Q''\}\text{ и }E_2 := \{QQ', AQ', Q'Q'', Q''Q'''\}.$$ 

Ясно, что поддеревья, соответствующие $E_1$ и $E_2$, связны.

\begin{figure}
\begin{minipage}{0.25\linewidth}
\center{\includegraphics[height=5cm]{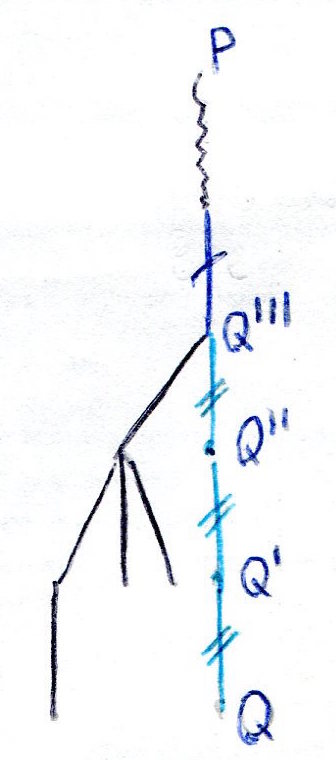}}
\caption{\label{fig:4-2-2}$n = 4$, $\deg Q' = 2$, $\deg Q'' = 2$.}
\end{minipage}
\hfill
\begin{minipage}{0.33\linewidth}
\center{\includegraphics[height=5cm]{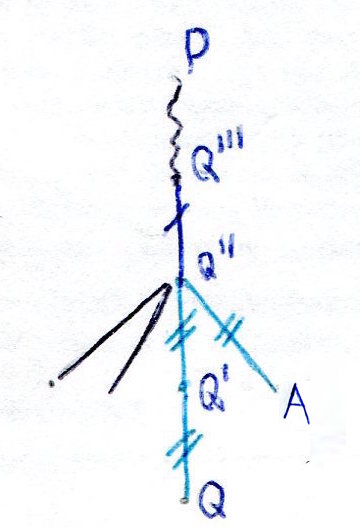}}
\begin{minipage}{0.78\linewidth}
\caption{\label{fig:4-2-ge3-1}$n = 4$, $\deg Q' = 2$, $\deg Q'' > 2$, $A$~---~висячая.}
\end{minipage}
\end{minipage}
\hfill
\begin{minipage}{0.33\linewidth}
\center{\includegraphics[height=5cm]{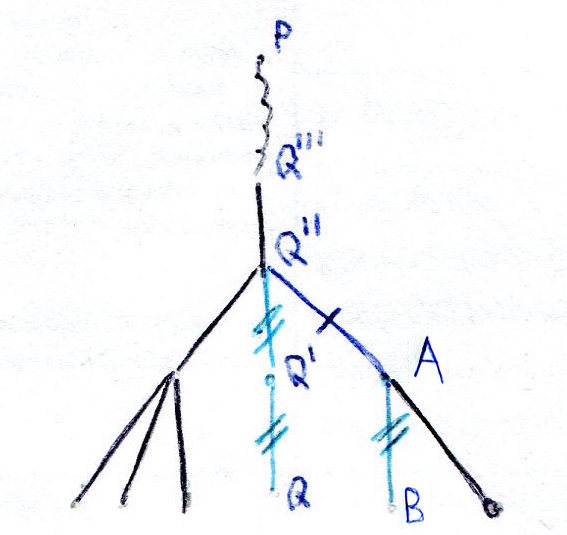}}
\begin{minipage}{0.78\linewidth}
\caption{\label{fig:4-2-ge3-ge2}$n = 4$, $\deg Q' = 2$, $\deg Q'' > 2$, $A$~---~не~висячая.}
\end{minipage}
\end{minipage}
\end{figure}

{\it Подслучай: $\deg Q' = 2$, $\deg Q'' = 2$,} см. рис. \ref{fig:4-2-2}.
В этом случае $Q'''$ не может быть висячей вершиной. Обозначим через $e$ ребро, смежное с $Q'''$, не совпадающее с $Q'''Q''$. Возьмём 
$$E_1 := E(G)\setminus \{QQ', Q'Q'', Q''Q'''\}\text{ и }E_2 := \{QQ', Q'Q'', Q''Q''', e\}.$$

Ясно, что поддеревья, соответствующие $E_1$ и $E_2$, связны.

{\it Подслучай: $\deg Q' = 2$, $\deg Q'' > 2$}. Обозначим через $A$ вершину, смежную с $Q''$, и не совпадающую с вершинами $Q'''$ и $Q'$.
Если $A$ --- висячая, см. рис. \ref{fig:4-2-ge3-1}, возьмём 
$$E_1 := E(G)\setminus \{QQ', Q'Q'', AQ''\}\text{ и }E_2 := \{QQ', Q'Q'', AQ'', Q''Q'''\}.$$

Ясно, что поддеревья, соответствующие $E_1$ и $E_2$, связны.

Если $A$ не является висячей, то обозначим через $B$ вершину, смежную с $A$ и не совпадающую с $Q''$, см. рис. \ref{fig:4-2-ge3-ge2}. Возьмём 
$$E_1 := E(G)\setminus \{QQ', Q'Q'', BA\}\text{ и }E_2 := \{QQ', Q'Q'', BA, AQ''\}.$$

Поддерево, соответствующее $E_2$, очевидно, связно. Чтобы доказать, что поддерево, соответствующее $E_1$, связно, достаточно убедиться в том, что вершина $B$ --- висячая. Действительно, если вершина $B$ не является висячей, тогда вершина, смежная с $B$ и не совпадающая с $A$, находится от вершины $P$ на большем расстоянии, чем вершина $Q$. Противоречие.

\end{proof}

    \section{Благодарности}
        \noindent
Спасибо А.\,Б.~Скопенкову, научному руководителю, за плодотворные обсуждения.

\noindent
Спасибо Г.\,Г.~Гусеву, рецензенту, за полезный отзыв.

\noindent
Спасибо С.\,Я.~Аввакумову, коллеге, за ценные замечания.

\noindent
Спасибо А.\,В.~Колодзею и Ф.\,М.~Алексееву за поддержку.

\end{document}